\newcounter{maintheorem}[equation]
\def\themaintheorem{\thesection.\@arabic \c@maintheorem}
\def\theequation{\thesection.\@arabic \c@equation}
\def\theenumi{\@alph\c@enumi}
\def\theenumii{\@roman\c@enumii}
\newtheorem{theorem}{Theorem}[section]
\newtheorem{lemma}[theorem]{Lemma}
\newtheorem{proposition}[theorem]{Proposition}
\newtheorem{corollary}[theorem]{Corollary}
\newtheorem{conjecture}[theorem]{Conjecture}
\newtheorem{question}[theorem]{Question}
\newenvironment{customthm}[1]
  {\innercustomthm}
  {\endinnercustomthm}
\theoremstyle{definition}
\newtheorem{definition}[theorem]{Definition}
\newtheorem{remark}[theorem]{Remark}
\newtheorem{example}[theorem]{Example}
\numberwithin{equation}{section}
\DeclareMathOperator{\im}{im}
\DeclareMathOperator{\HF}{HF}
\DeclareMathOperator{\Soc}{Soc}
\newcommand{\ann}{\mbox{\rm Ann}}                 
\newcommand{\Hom}{\mbox{\rm Hom}}
\newcommand{\m}{{\mathfrak m }}
\newcommand{\h}{\rm H}
\title[ Symmetric decomposition of the Hilbert function of an ideal ]{Symmetric decomposition of the Hilbert function of an ideal}
\author[Bhat]{Meghana Bhat}
\address[Meghana Bhat]{Department of Mathematics, Indian Institute of Technology Dharwad, Permanent Campus, Chikkamalligwad,  Dharwad - 580011, Karnataka, India}
\email{mbhat.math@gmail.com}
\author[Dubey]{Saipriya Dubey}
\address[Saipriya Dubey]{Chennai Mathematical Institute, Plot H1 SIPCOT IT Park, Siruseri, Kelambakkam - 603103, Tamil
Nadu, India}
\email{saipriya721@gmail.com}
\author[Masuti]{Shreedevi K. Masuti}
\address[Shreedevi K. Masuti]{Department of Mathematics, Indian Institute of Technology Dharwad, Permanent Campus, Chikkamalligwad,  Dharwad - 580011, Karnataka, India}
\email{shreedevi@iitdh.ac.in}
\thanks{MB is supported by Prime Minister's Research Fellowship (PMRF), Govt. of India. 
SD is partially supported by a grant from the Infosys Foundation. SKM is supported by
CRG grant CRG/2022/007572 and MATRICS grant MTR/2022/000816  funded by SERB,  Govt. of India. The project is partially supported by SPARC grant SPARC/2019-2020/P1566/SL}
\subjclass[2010]{Primary: 13D40} 
\date{\today}
\keywords{Gorenstein rings, symmetric decomposition, Hilbert function}
\begin{document}

\begin{abstract}
Let $(R, \mathcal{M})$ be a local ring over a field $k$ with $k = R/\mathcal M$ and $J$ an ideal in $R$ 
such that $A =R/J$ is an Artinian Gorenstein (AG) $k$-algebra. In \cite{I89} A. Iarrobino introduced the symmetric decomposition of the Hilbert function of $A$. This became a very powerful tool for classifying the Hilbert functions of AG $k$-algebras. 
In this article, we introduce the symmetric decomposition of the Hilbert function of any ideal $I$ in $A.$  Our hope is that this result will be useful in classifying the possible Hilbert function of an ideal in an AG $k$-algebra. We illustrate this by giving a complete list of $2$-admissible sequences of length at most $3$ and with $h_0=2$ that are realizable by an ideal in an AG $k$-algebra.
\end{abstract}

\maketitle

\tableofcontents
\addtocontents{toc}{\protect\setcounter{tocdepth}{1}}
\section{Introduction}
Let $(R, \mathcal{M})$ be a local ring over a field $k$ where $k = R/\mathcal M$ and $J$ an ideal in $R$ such that $A=R/J$ is an Artinian Gorenstein (AG) $k$-algebra. One of the most important numerical invariants associated with an AG $k$-algebra is its Hilbert function. 
Recall that the Hilbert function of $A$ is by definition the Hilbert function of its {\it tangent cone}, namely $G(\m)=\oplus_{n \geq 0}\m^n/\m^{n+1}$ where $\m=\mathcal{M}/J.$
Classifying the Hilbert function of AG $k$-algebras is a widely open problem in commutative algebra. 
They have been characterized in certain cases, see \cite{Br77,I84Compressed,JMR23,Macaulay1904,MR18}.
The purpose of this article is to understand the structure of the Hilbert function of $I,$ namely, the Hilbert function of its {\it associated graded ring} $G(I):=\oplus_{n \geq 0}I^n/I^{n+1}$ for any ideal $I$ in $A.$ The associated graded ring $G(I)$ plays an important role in algebraic geometry, in particular in resolution of singularities. There has been a huge research in the literature on studying the properties of $G(I)$ in terms of the properties of $I$ and a local ring $A$ of any dimension (not just AG $k$-algebra), see \cite{CPR05, GN94, Hun87, PU99, RV96, RV10, Sally77, V94}.   
\vskip 2mm

For a standard graded $k$-algebra $S=\bigoplus_{i \geq 0}S_i$ the Hilbert function $\HF: \mathbb{Z} \to \mathbb{Z}_{\geq 0}$ defined as $\HF(S)_i := \ell_{S_0}(S_i).$
It is well known that the Hilbert function of a {\it graded} AG $k$-algebra is symmetric.
This is no longer true with the Hilbert function of the local AG $k$-algebra.
However, A. Iarrobino in \cite{I94Memoir} developed a beautiful theory that shows the Hilbert function of an AG $k$-algebra admits a symmetric decomposition, namely, 
$$\HF(G(\m))_i = \sum_{a = 0}^{j-2} H(a)_i,$$
    where $H(a)$ is symmetric about $(j-a)/2.$ Here $j$ is the {\it socle degree} of $A,$ that is the maximum integer such that $\m^j \neq 0.$ This became a very powerful tool in classifying the Hilbert functions of local AG $k$-algebras (see for example \cite{MR18}). Recently, there has been intense research on symmetric decomposition, for instance in \cite{IP21} the authors have studied the possible vectors $H(a)$ in a symmetric decomposition of the Hilbert function of AG $k$-algebra, in \cite{IP21_B} the authors have studied its interaction with Jordan type.
 Very recently, M. Wojtala has extended the symmetric decomposition theory to self-dual modules in \cite{W24}. 
 
In Section 3, we introduce a symmetric decomposition of the Hilbert function of an arbitrary ideal in an AG local $k$-algebra. We prove that:

\begin{customthm}{1} (Theorem \ref{Thm:SymmetricDecomposition})
\label{Thm:MainIntro}
 Let $(A, \m)$ be an AG $k$-algebra, and $I$ an ideal in $A$ such that $I^u \neq 0$ and $I^{u+1} = 0$. Then the Hilbert function of $I$ $$\HF(G(I))_i = \sum_{a = 0}^u H(a)_i,$$
    where $H(a)$ is symmetric about $(u-a)/2.$
 \end{customthm}
 As a consequence, we recover a result by W. Heinzer, M.-K. Kim, and B. Ulrich in \cite{HKU11} which asserts that $G(I)$ is Gorenstein if and only if the Hilbert function of $G(I)$ is symmetric (Theorem \ref{thm:HKUresult}).
\vskip 2mm

Our hope is that Theorem \ref{Thm:MainIntro} will be useful in classifying the Hilbert function of an ideal in an AG $k$-algebra. One of the difficulties in understanding the structure of the Hilbert function of $G(I)$ is that, in this case $G(I)_0=A/I,$ which is not a field unless $I=\m.$
In \cite{BN99} C. Blancafort and S. Nollet characterized the Hilbert functions of standard graded algebras of the form $R_0[x_1,\ldots,x_b]/I$ where $R_0$ is an Artinian ring. They call such a sequence a {\it $b$-admissible sequence}. This result generalizes a result of Macaulay which characterizes the Hilbert functions of standard graded algebras over a field \cite{Macaulay1927}. We ask: which $b$-admissible sequences occur as the Hilbert function of $G(I)$ for some ideal $I$ in an AG $k$-algebra $A?$ We call a sequence that occurs as the Hilbert function of $G(I)$ for some ideal $I$ in an AG  $k$-algebra $A$ as an {\it $I$-Gorenstein sequence}.  Recall that a Gorenstein sequence refers to a sequence that occurs as the Hilbert function of some AG $k$-algebra. To the best of our knowledge, this is the first time realizable Gorenstein sequences are studied in this generality.  
\vskip 2mm
As with the case of Gorenstein sequences, the classification of $b$-admissible sequences that are $I$-Gorenstein is a difficult problem. However, we want to investigate the problem in certain cases. For instance, the structure of the Hilbert function of the AG $k$-algebra of codimension two is well known \cite{Macaulay1904, Macaulay1916}. Inspired by this, in this paper we investigate the structure of Hilbert function of an ideal in a codimension two AG $k$-algebra. 
But we are far from having a complete solution. In Section 4, we propose some problems and conjectures on possible $I$-Gorenstein sequences in a codimension two AG $k$-algebra. 
In Section 5, we give a complete list of $2$-admissible sequences of length at most $3$ and with $h_0=2$ that are $I$-Gorenstein sequences which supports our conjecture. To conclude that certain $2$-admissible sequences are not $I$-Gorenstein, we have deeply extracted information coming from a possible symmetric decomposition of this sequence. In order to show that a certain $2$-admissible sequence $H$ is an $I$-Gorenstein sequence we have constructed an explicit AG $k$-algebra $A$ and an ideal $I$ in $A$ such that $G(I)$ has the Hilbert function $H,$ see Table \ref{table:IGorenstein}.

In \cite[Theorem 2]{I89} Iarrobino showed that the Hilbert function of codimension $2$ AG $k$-algebra determines its symmetric decomposition. In Section 4, we provide an example of a $2$-admissible $I$-Gorenstein sequence with two distinct admissible symmetric decompositions (Example \ref{Example:2SymmDec}).
 \vskip 2mm
We have recalled some properties of bilinear maps,  subspaces associated with them, and properties of AG $k$-algebras in Section 2. 
\vskip 2mm
We have used computer algebra systems CoCoA (see \cite{CoCoA}), Macaulay2 (see \cite{M2}), Singular (see \cite{Singular}), and the library (\cite{EliasCode}) for various computations in this paper.
\vskip 2mm
\noindent {\bf Acknowledgments}: The authors thank M.E. Rossi for helpful discussions and M. Pedro for sharing the Macaulay2 code for computing the symmetric decomposition. We have greatly benefited from the examples computed using this code in Macaulay2. 

\section{Preliminaries}
In this section, we recall some properties of bilinear maps and the subspaces associated with them that we need in the next section. These properties are well-known, but we have added details of some results for the sake of completeness. 
One of the important properties of AG $k$-algebras is that one can associate a $k$-bilinear map with it. We recall this correspondence in this section. We also recall the definition of a $b$-admissible sequence introduced by Blancafort and Nollet in \cite{BN99}.

\subsection{Properties of bilinear maps}
\noindent Let $k$ be a field and $V, W$ be finite-dimensional $k$-vector spaces. Recall that a map $\varphi: V \times W \to k$ is said to be bilinear if it satisfies the following properties: for any $x,y \in V, z,w \in W $ and $\alpha \in k$, 
\begin{enumerate}[(i)]
  \item $\varphi(\alpha(x+y), z) = \alpha\,\varphi(x, z) +\alpha \, \varphi(y, z)$;
  \item $\varphi(x, \alpha(z+w)) = \alpha \,\varphi(x, z) + \alpha \, \varphi(x, w)$. 
\end{enumerate}

Let $\varphi : V \times W \to k$ be a bilinear map. For a subset $S$ of $ V ,$ we define a subset $S^\perp$ of $W$ as
  \[
   S^{\perp}:= \{ w \in W \mid \varphi(v, w) = 0  ~\textrm{for every} ~v \in S \}.
  \]
  Similarly, if $T$ is a subset of $W,$ then a subset $\ann_V(T)$ of $V$ is
\[\ann_V(T) := \{ v \in V \mid \varphi(v, t) = 0  ~\textrm{for all} ~t \in T \}.\]  

Note that both $\ann_V(T)$ and $S^{\perp}$ are subspaces of $V$ and $W,$ respectively. For subsets $S,O$ of $V$ recall that \[S+O:=\{x+y:x \in S \mbox{ and } y \in O\}.\]

\begin{proposition}\label{Prop:PerpAnn}
    Let $\varphi : V \times W \to k$ be a bilinear map and $S, O \subseteq V ,$ and $T , U \subseteq W$ are subsets.
    Then 
    \begin{enumerate}[(i)]
        \item If $T\subseteq U$, then $\ann_V(U) \subseteq \ann_V(T)$.
        \item If $S \subseteq O$, then $O^{\perp} \subseteq S^{\perp}$.
        \item $\ann_V(T+U) = \ann_V(T)\cap \ann_V(U).$
        \item $(S+O)^{\perp} = S^{\perp} \cap O^{\perp}$.
        
    \end{enumerate}
\end{proposition}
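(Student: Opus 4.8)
The plan is to treat the four parts in the stated order, since the two equalities in (iii) and (iv) can be assembled from the monotonicity statements in (i) and (ii) together with the additivity of $\varphi$. Before starting I would extract plain additivity from the defining properties: taking $\alpha = 1$ in the two bilinearity axioms yields $\varphi(x+y,z) = \varphi(x,z)+\varphi(y,z)$ and $\varphi(x,z+w)=\varphi(x,z)+\varphi(x,w)$, which is all I will use.

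Parts (i) and (ii) are immediate from the definitions. For (i), any $v \in \ann_V(U)$ satisfies $\varphi(v,u)=0$ for all $u \in U$; restricting to $T \subseteq U$ gives $\varphi(v,t)=0$ for all $t \in T$, so $v \in \ann_V(T)$. Part (ii) is the mirror statement obtained by exchanging the roles of the two factors of $\varphi$: for $w \in O^{\perp}$ one has $\varphi(v,w)=0$ for every $v \in O$, and since $S \subseteq O$ this gives $w \in S^{\perp}$.

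For (iii) I would argue by double inclusion. The inclusion $\ann_V(T)\cap\ann_V(U)\subseteq\ann_V(T+U)$ is where additivity enters: if $v$ kills both $T$ and $U$, then every element $t+u$ of $T+U$ satisfies $\varphi(v,t+u)=\varphi(v,t)+\varphi(v,u)=0$. For the reverse inclusion I would use $T\subseteq T+U$ and $U\subseteq T+U$ and apply part (i) twice to obtain $\ann_V(T+U)\subseteq\ann_V(T)$ and $\ann_V(T+U)\subseteq\ann_V(U)$, hence $\ann_V(T+U)\subseteq\ann_V(T)\cap\ann_V(U)$. Part (iv) is proved in exactly the same way, invoking part (ii) in place of part (i).

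Since everything reduces to the definitions plus additivity, there is no genuine obstacle here; the only point deserving a moment's attention is the reverse inclusions in (iii) and (iv), which rest on the containments $T,U\subseteq T+U$ (and $S,O\subseteq S+O$)—valid because the zero vector lies in the subspaces in question—before the monotonicity of (i)/(ii) can be applied.
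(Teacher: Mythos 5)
Your proof is correct and is precisely the routine direct verification that the paper leaves to the reader (its proof reads only ``Easy to check''). You have also rightly flagged the one point of care—the reverse inclusions in (iii) and (iv) need $T,U\subseteq T+U$ (resp.\ $S,O\subseteq S+O$), which requires $0$ to lie in the sets involved, so these parts really should be read with $S,O,T,U$ taken to be subspaces rather than arbitrary subsets as literally stated.
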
 
\begin{proof} Easy to check.
\end{proof}
Throughout this paper, for any $k$-vector space $V$, we set $V^{\widecheck{}} :=\Hom(V,k).$ For a subset $S$ of $V,$ let 
\[
( 0\colon_{V^{\widecheck{}}} S):=\{f\in {V^{\widecheck{}}}:f(t)=0 \mbox{ for all } t \in S\}.
\] Note that $( 0\colon_{V^{\widecheck{}}} S)$ is a subspace of ${V^{\widecheck{}}}.$ 
We denote the  $k$-vector space  dimension of $V$ by $\dim V.$

\begin{proposition}\label{Prop:ColonPerp}
    Let $V,W$ be finite dimensional vector spaces of the same dimension, and $S\subseteq V, T\subseteq W$ are subspaces. Let $\varphi: V\times W \to k$ be a bilinear map. 
    \begin{enumerate}[(i)]
        \item If the $k$-linear map  $\varphi' : V \to \Hom(W,k)$ induced by $\varphi$ is injective, then
        \[( 0\colon_{W^{\widecheck{}}} T) \cong \ann_V(T).\]
        Moreover, $\dim \ann_V(T)=\dim W-\dim T.$
        \item If the $k$-linear map $\varphi'': W \to \Hom(V,k)$ induced by $\varphi$ is injective, then
        \[( 0\colon_{V^{\widecheck{}}} S) \cong S^{\perp}.\]
        Moreover, $\dim S^{\perp} = \dim V - \dim S.$
        \item If $\varphi$ is non-degenerate, then
       \[ \ann_{V}(S^{\perp}) = S,\, (\ann_{V}T)^{\perp} = T.\]
       \item Assume $\varphi$ is non-degenerate. Let $S, O$ be subspaces of $V$ and $T, U$ be subspaces of $W$. Then 
       \[(S\cap O)^{\perp} = S^{\perp} + O^{\perp}, \, \ann_V(T \cap U ) = \ann_V(T) + \ann_V(U).\]
    \end{enumerate}
\end{proposition}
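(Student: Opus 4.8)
The plan is to derive all four parts from two standard facts about linear duality, using crucially the equal-dimension hypothesis. Write $\varphi'(v)\in W^{\widecheck{}}$ for the functional $w\mapsto\varphi(v,w)$ and $\varphi''(w)\in V^{\widecheck{}}$ for $v\mapsto\varphi(v,w)$, so that $\varphi',\varphi''$ are the induced maps in the statement. The only external input I will need is the elementary dimension count for annihilators in a dual space, namely $\dim(0\colon_{W^{\widecheck{}}}T)=\dim W-\dim T$ and $\dim(0\colon_{V^{\widecheck{}}}S)=\dim V-\dim S$.

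For (i), I first observe the set-theoretic identity $\ann_V(T)=(\varphi')^{-1}\bigl(0\colon_{W^{\widecheck{}}}T\bigr)$, which is immediate from unwinding the definitions: $v\in\ann_V(T)$ exactly when $\varphi'(v)$ vanishes on $T$. The key point is that $\varphi'\colon V\to W^{\widecheck{}}$ is an injective $k$-linear map between spaces of equal dimension (here $\dim V=\dim W=\dim W^{\widecheck{}}$), hence an isomorphism. Consequently $\varphi'$ restricts to an isomorphism $\ann_V(T)\xrightarrow{\sim}\bigl(0\colon_{W^{\widecheck{}}}T\bigr)$, and transporting the dimension formula gives $\dim\ann_V(T)=\dim W-\dim T$. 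Part (ii) is proved identically, with $\varphi''$ in place of $\varphi'$ and $S^{\perp}=(\varphi'')^{-1}\bigl(0\colon_{V^{\widecheck{}}}S\bigr)$.

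For (iii), non-degeneracy means both $\varphi'$ and $\varphi''$ are injective, so both (i) and (ii) are available. The inclusion $S\subseteq\ann_V(S^{\perp})$ holds trivially since every $s\in S$ pairs to zero with every $w\in S^{\perp}$. To force equality I combine the two dimension formulas: by (ii), $\dim S^{\perp}=\dim V-\dim S$, and then by (i) applied to $T=S^{\perp}$, $\dim\ann_V(S^{\perp})=\dim W-\dim S^{\perp}=\dim W-(\dim V-\dim S)=\dim S$, using $\dim V=\dim W$. A subspace contained in another of the same dimension is equal to it, so $\ann_V(S^{\perp})=S$. The identity $(\ann_V T)^{\perp}=T$ follows by the same dimension-matching argument.

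Part (iv) then drops out formally from (iii) together with Proposition~\ref{Prop:PerpAnn}. For the first identity, apply Proposition~\ref{Prop:PerpAnn}(iii) to $T=S^{\perp}$ and $U=O^{\perp}$ to get $\ann_V(S^{\perp}+O^{\perp})=\ann_V(S^{\perp})\cap\ann_V(O^{\perp})=S\cap O$ by (iii); applying $(-)^{\perp}$ and invoking $(\ann_V X)^{\perp}=X$ from (iii) yields $(S\cap O)^{\perp}=S^{\perp}+O^{\perp}$. The second identity is the mirror image, starting from Proposition~\ref{Prop:PerpAnn}(iv) applied to $\ann_V T$ and $\ann_V U$ and then applying $\ann_V(-)$. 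I expect no real obstacle here: the entire proposition is bookkeeping around duality, and the single load-bearing idea is that the equal-dimension hypothesis upgrades the injective induced maps to isomorphisms, which is exactly what pins down the annihilator dimensions and lets the double-annihilator identities in (iii) close up.
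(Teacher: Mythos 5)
Your proof is correct and follows essentially the same route as the paper: upgrading the injective induced maps to isomorphisms via the equal-dimension hypothesis, identifying $\ann_V(T)$ with $(0\colon_{W^{\widecheck{}}}T)$, and closing (iii) and (iv) by dimension counting together with Proposition \ref{Prop:PerpAnn}. The only cosmetic difference is that you phrase the isomorphism in (i) as a preimage statement rather than the paper's explicit restriction map $\psi$, which is an equivalent packaging of the same argument.
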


\begin{proof}
    (i) Since $\dim V = \dim W$, $\dim V=\dim W^{\widecheck{}}.$ Therefore, if \[ \varphi' : V \to \Hom(W,k)=W^{\widecheck{}}\] induced by $\varphi$ is injective, then $\varphi'$ is surjective as well. Hence $\varphi'$ is an isomorphism.
     
    \noindent Let $\psi$ be the restriction map of $\phi'$ to $\ann_V(T)$, that is $ \psi:\ann_V(T) \to W^{\widecheck{}}$ is defined as $\psi(v)=\phi'(v).$ Note that for $v \in \ann_V(T)$, 
    \[
    \phi'(v)(t)=\phi(v,t)=0 \mbox{ for all } t \in T. 
    \] 
    Hence $\psi(v)=\phi'(v)\in ( 0\colon_{W^{\widecheck{}}} T).$ Therefore $\im(\psi) \subseteq ( 0\colon_{W^{\widecheck{}}} T)$. 
    Clearly, as $\phi'$ is injective, $\psi$ is injective. Also, for any $f \in ( 0\colon_{W^{\widecheck{}}} T),$ surjectivity of $\phi'$ implies that there exists $v \in V$ such that $\phi'(v)=f.$ Then $v \in \ann_V(T)$ because
    \[
    \phi(v,t)=\phi'(v)(t)=f(t)=0 \mbox{ for all } t \in T.
    \]
    Thus $\psi(v)=\phi'(v)=f.$ Hence $\im(\psi) =( 0\colon_{W^{\widecheck{}}} T) $. Thus we get $\ann_V(T) \cong ( 0\colon_{W^{\widecheck{}}} T).$

\noindent   In order to compute the dimension of $\ann_V(T),$ first observe that $(0\colon_{W^{\widecheck{}}} T) \simeq (W/T)^{\widecheck{}}$. Hence 
\[
\dim \ann_V(T)=\dim  (0\colon_{W^{\widecheck{}}} T)=\dim (W/T)^{\widecheck{}}=\dim W/T=\dim W-\dim T.
\]
    \vskip 2mm
    
\noindent (ii) Proof is similar to (i). 
 \vskip 2mm
 \noindent (iii) 
We have
\begin{eqnarray}\label{EQ:annPerp}
    \dim \ann_V(S^{\perp}) &=& \dim W - \dim S^{\perp} \hspace{2.2cm}\mbox{ (from (i))} \nonumber\\
    &=& \dim W-(\dim V - \dim S) \hspace{0.9cm} \mbox{(from (ii))} \nonumber\\
    &=&\dim S \hspace{4cm}    \mbox{(since $\dim W=\dim V$)}. \nonumber
\end{eqnarray}
Since $ S \subseteq \ann_V(S^{\perp}) $, we obtain that $\ann_V(S^{\perp}) =S.$

\noindent Similarly, $(\ann_{V}T)^{\perp} = T$ is proved.

\noindent (iv) We have
\begin{equation*}
    \begin{split}
        \ann_V(S^{\perp} + O^{\perp})& = \ann_V(S^{\perp}) \cap  \ann_V(O^{\perp}) \quad (\text{by Proposition } \ref{Prop:PerpAnn} \,(iii)) \\
                                    & =  S \cap O \quad \hspace{2.9cm} (\text{by part (iii)}).
    \end{split}
\end{equation*}
Therefore we obtain
\begin{equation*}
    \begin{split}
         (S \cap O)^{\perp} &=(\ann_V(S^{\perp} + O^{\perp}))^{\perp} \\
        &=S^{\perp} + O^{\perp}                    \quad (\text{by part (iii)}).
    \end{split}
\end{equation*}
Similarly, $ \ann_V(T \cap U ) = \ann_V(T) + \ann_V(U)$ is proved.

\end{proof}

\subsection{AG $k$-algebras}
Let $(R, \mathcal{M})$ be a local ring over a field $k \cong R/\mathcal{M}.$ In this subsection we recall that there is a one-many correspondence between AG quotients of $R$ and $k$-linear maps from $R \to k$ satisfying certain conditions. This map induces a non-degenerate bilinear map from $A \times A \to k$ which is a key ingredient to derive our results in the next section. 

\begin{proposition}\label{Prop:Lmapcorr} 
Let $(R, \mathcal{M})$ be a local ring over a field $k$. 
There is a one-many correspondence between the following sets:\\
\[
\left\{
\begin{array}{c}
\text{AG quotients } A \text { of } R \\ \text{of socle degree s
}
\end{array}
\right\}
\xleftrightarrow{}
\left\{
\begin{array}{c}
k \text{-linear map } \varphi: R \to k \text{ such that } \\ \varphi\mid_{\mathcal{M}^{s+1}} =0, \varphi\mid_{\mathcal{M}^{s}} \neq 0. 

\end{array}
\right\}
\]
Here $A=R/J$ with $J=\{h \in R \mid \varphi(Rh)=0\}.$
\end{proposition}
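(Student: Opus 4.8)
The plan is to exhibit the correspondence as a genuine map in each direction, with the non-degenerate $k$-bilinear pairing attached to $\varphi$ as the central object. Given a $k$-linear map $\varphi \colon R \to k$ with $\varphi|_{\mathcal{M}^{s+1}} = 0$ and $\varphi|_{\mathcal{M}^{s}} \neq 0$, I would first set $J = \{h \in R : \varphi(Rh) = 0\}$ and check directly that $J$ is an ideal, since for $h \in J$ and $r' \in R$ one has $\varphi(r(r'h)) = \varphi((rr')h) = 0$ for all $r$. The key device is the pairing $\Phi \colon A \times A \to k$, $\Phi(\bar r, \bar r') = \varphi(rr')$, which is well defined precisely because $J$ annihilates $\varphi$ on both sides, and which is symmetric since $R$ is commutative.

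For the forward direction I would argue as follows. Since $\varphi|_{\mathcal{M}^{s+1}} = 0$, every $h \in \mathcal{M}^{s+1}$ satisfies $rh \in \mathcal{M}^{s+1}$ and hence $\varphi(rh) = 0$, so $\mathcal{M}^{s+1} \subseteq J$ and $A = R/J$ is Artinian with $\m^{s+1} = 0$. The left radical of $\Phi$ is $\{\bar r : \varphi(rR) = 0\} = 0$ by the very definition of $J$, so $\Phi$ is non-degenerate; consequently the induced $A$-linear map $A \to A^{\widecheck{}}$, $\bar r \mapsto \Phi(\bar r, -)$, is injective, and a dimension count (using $\dim_k A < \infty$) upgrades it to an isomorphism of $A$-modules. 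Thus $A \cong A^{\widecheck{}}$, and computing $\Soc(A^{\widecheck{}}) = \{f : f|_{\m} = 0\} \cong (A/\m)^{\widecheck{}} \cong k$ shows $\dim_k \Soc(A) = 1$, i.e. $A$ is Gorenstein. Finally $\m^s \neq 0$ because $\varphi|_{\mathcal{M}^s} \neq 0$ forces some $h \in \mathcal{M}^s$ with $\varphi(h) \neq 0$, hence $\bar h \neq 0$ in $\m^s$; together with $\m^{s+1} = 0$ this pins the socle degree at $s$.

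For the reverse direction, starting from an AG quotient $A = R/J$ of socle degree $s$, I would use that $\Soc(A) = \m^s$ is one-dimensional and that $A^{\widecheck{}}$ is a free $A$-module of rank one. The crucial observation is that $\m\, A^{\widecheck{}} = \{f \in A^{\widecheck{}} : f|_{\Soc A} = 0\}$, so a functional $\phi_0 \in A^{\widecheck{}}$ generates $A^{\widecheck{}}$ as an $A$-module exactly when $\phi_0|_{\m^s} \neq 0$. Choosing such a $\phi_0$ and setting $\varphi = \phi_0 \circ \pi$ with $\pi \colon R \to A$, the conditions $\varphi|_{\mathcal{M}^{s+1}} = 0$ and $\varphi|_{\mathcal{M}^{s}} \neq 0$ are immediate from $\m^{s+1} = 0$ and $\phi_0|_{\m^s} \neq 0$. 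It remains to recover the original ideal: the inclusion $J \subseteq \{h : \varphi(Rh)=0\}$ is clear, and conversely $\varphi(Rh)=0$ means $\bar h \cdot \phi_0 = 0$ in $A^{\widecheck{}}$, which forces $\bar h = 0$ since $\bar h \mapsto \bar h \cdot \phi_0$ is the isomorphism $A \xrightarrow{\sim} A^{\widecheck{}}$. I expect this last step — identifying the generators of $A^{\widecheck{}}$ with the functionals nonzero on the socle, and thereby reconstructing $J$ — to be the main obstacle, since it is exactly where the Gorenstein hypothesis (self-duality of $A$) enters; the many-valued nature of the correspondence then reflects the freedom in choosing $\phi_0$ up to a unit of $A$, all such choices yielding the same $J$.
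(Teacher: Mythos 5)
Your argument is correct, but note that the paper does not actually prove this proposition: its ``proof'' is a one-line citation to \cite[Lemma 1.1]{I94Memoir}, so there is no in-paper argument to compare against, and what you have written is essentially the standard Matlis-duality/inverse-systems proof that lies behind that reference. Both of your directions are sound: in the forward direction you correctly get $\mathcal{M}^{s+1}\subseteq J$, non-degeneracy of the pairing directly from the definition of $J$, the $A$-linear isomorphism $A\cong A^{\widecheck{}}$ by injectivity plus a dimension count, and Gorensteinness from $\Soc(A^{\widecheck{}})=\{f: f|_{\m}=0\}\cong (A/\m)^{\widecheck{}}\cong k$; in the reverse direction the identity $\m A^{\widecheck{}}=\{f: f|_{\Soc A}=0\}$ (both sides are codimension-one subspaces, one contained in the other, since $\dim_k A^{\widecheck{}}/\m A^{\widecheck{}}=\dim_k\Soc A=1$) correctly characterizes the generators of $A^{\widecheck{}}$ as the functionals not vanishing on $\m^{s}=\Soc(A)$, and injectivity of $\bar h\mapsto \bar h\cdot\phi_0$ recovers $J$. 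Two small points you should make explicit if this is to be fully self-contained: (i) the finiteness used in your dimension counts, and the claim that $A=R/J$ is Artinian, rest on $R$ being Noetherian (so that $R/\mathcal{M}^{s+1}$ has finite length), which the paper assumes only implicitly; (ii) rather than \emph{assuming} that $A^{\widecheck{}}$ is free of rank one over $A$, you can derive it from your own observation: once $\phi_0|_{\Soc A}\neq 0$, Nakayama gives that $\phi_0$ generates $A^{\widecheck{}}$, and the surjection $A\to A^{\widecheck{}}$ between spaces of equal finite dimension is then an isomorphism, which streamlines the reverse direction and removes any appearance of circularity.
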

    
\begin{proof}
Refer to \cite[Lemma 1.1]{I94Memoir}.        
\end{proof}

\begin{lemma}\label{Lem:nonDegenerate}
    Let $A=R/J$ be an AG $k$-algebra of socle degree $s$. Then there exists a $k$-linear map $\varphi: A \to k$ which is non-zero on the socle of $A$. The $k$-bilinear map induced by this linear map $\langle \cdot, \cdot \rangle: A \times A \to k$, namely $\langle a, b \rangle = \varphi(ab)$, is non-degenerate.
\end{lemma}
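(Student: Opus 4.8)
The plan is to invoke the correspondence of Proposition \ref{Prop:Lmapcorr} to produce the desired linear functional on $R$, descend it to $A$, and then read off both assertions from the explicit description of $J$. First I would apply Proposition \ref{Prop:Lmapcorr} to the given AG quotient $A = R/J$: it yields a $k$-linear map $\varphi : R \to k$ with $\varphi|_{\mathcal{M}^{s+1}} = 0$, $\varphi|_{\mathcal{M}^{s}} \neq 0$, and $J = \{h \in R \mid \varphi(Rh) = 0\}$. Since $1 \in R$, every $h \in J$ satisfies $\varphi(h) = \varphi(1 \cdot h) = 0$, so $\varphi$ vanishes on $J$ and descends to a well-defined $k$-linear map $\bar{\varphi} : A \to k$ determined by $\bar{\varphi}(\bar{r}) = \varphi(r)$ for $r \in R$.

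Next I would verify that $\bar{\varphi}$ is non-zero on the socle of $A$. Because $A$ is Artinian Gorenstein local, $\Soc(A) = (0 :_A \m)$ is one-dimensional; since $\m \cdot \m^{s} = \m^{s+1} = 0$ and $\m^{s} \neq 0$, this one-dimensionality forces $\Soc(A) = \m^{s} = (\mathcal{M}^{s} + J)/J$. Choosing $x \in \mathcal{M}^{s}$ with $\varphi(x) \neq 0$, which exists as $\varphi|_{\mathcal{M}^{s}} \neq 0$, its image $\bar{x}$ spans $\Soc(A)$ and $\bar{\varphi}(\bar{x}) = \varphi(x) \neq 0$, so $\bar{\varphi}$ does not vanish on the socle.

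Finally, for non-degeneracy of $\langle a, b \rangle = \bar{\varphi}(ab)$, I would suppose $a \in A$ lies in the left kernel, that is $\bar{\varphi}(ab) = 0$ for all $b \in A$. Lifting $a$ to some $h \in R$ and observing that every $b \in A$ is the image of some $r \in R$, this says $\varphi(hr) = 0$ for all $r \in R$, i.e. $\varphi(Rh) = 0$; by the defining description of $J$ this gives $h \in J$, whence $a = 0$. Since $A$ is commutative the form is symmetric, $\langle a, b \rangle = \bar{\varphi}(ab) = \bar{\varphi}(ba) = \langle b, a \rangle$, so a trivial left kernel immediately forces a trivial right kernel, establishing non-degeneracy.

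The only genuinely substantive input is the existence of $\varphi$ together with its defining property $J = \{h \mid \varphi(Rh) = 0\}$, which is precisely what Proposition \ref{Prop:Lmapcorr} supplies. Once that is in hand, the identification $\Soc(A) = \m^{s}$ and the kernel computation are formal, so I expect no serious obstacle beyond carefully tracking the passage between $R$ and the quotient $A = R/J$.
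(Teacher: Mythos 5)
Your proposal is correct and follows essentially the same route as the paper: both invoke Proposition \ref{Prop:Lmapcorr} to obtain the linear functional on $R$, descend it to $A$ using the description $J=\{h\in R\mid \varphi(Rh)=0\}$, identify the socle with $(\mathcal{M}^{s}+J)/J$ to get non-vanishing there, and deduce non-degeneracy from that same description of $J$. Your extra remarks (justifying $\Soc(A)=\m^{s}$ via one-dimensionality, and using symmetry of the form to handle the right kernel) are harmless streamlinings of steps the paper treats explicitly or leaves implicit.
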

  
\begin{proof} By Proposition \ref{Prop:Lmapcorr} corresponding to $A=R/J$, there exists a $k$-linear map $\psi \colon R \to k$ such that $\psi\mid_{\mathcal{M}^{s+1}} =0, \psi\mid_{\mathcal{M}^{s}} \neq 0 $. Define $\varphi: A \to k$ as $\varphi(a+J)=\psi(a)$.
By Proposition \ref{Prop:Lmapcorr}, we have $\psi(Rx)=0$ for all $x\in J.$ Hence, in particular $\psi(x)=0$ for all $x \in J.$ Thus $\varphi$ is well defined. Moreover, since $\text{Soc } A = \mathcal{M}^s+J/J $, $\varphi(\mathcal{M}^s+J/J ) = \psi(\mathcal{M}^s) \neq 0$. Hence $\varphi$ is non-zero on the socle of $A.$
        Clearly, the induced map $\langle \cdot , \cdot \rangle : A\times
   A \to k$ given by $\langle a, b \rangle = \varphi(ab)$ is $k$-bilinear.
        
         We prove that $\langle \cdot , \cdot \rangle$ is non-degenerate. 
        Suppose $a + J \in A= R/J$ is such that $\langle a+J, b+J \rangle = 0,$ for all $b \in R.$  Then, for all $b \in R,$
        \begin{equation*}
            \langle a+J, b+J \rangle =  \varphi(ab+J) = \psi(ab)=0.
        \end{equation*}
        Hence by the description of $J$ in Proposition \ref{Prop:Lmapcorr}, we get that $a \in J$. Hence $a+J = 0.$ 
        \newline Similarly, if $b+J \in A$ is such that $ \langle a+J, b+J \rangle = 0$ for all $a+J \in A,$ then  $b+J = 0$ in $A$.
        Therefore our claim is proved. 
  \end{proof}    

    By abusing the notation we denote the bilinear map  $\langle \cdot , \cdot \rangle$ in Proposition \ref{Lem:nonDegenerate} which is induced by the linear map $\varphi:A \to k$ as $\varphi$. We recall the definition of perp and annihilator of an ideal $I$ in $A$ induced by this bilinear map $\varphi:$ 
    \[
    \ann_A(I)=\{a \in A\mid \varphi(at)=0, \text{ for all } t \in I\},\, \text{ and } \,
    I^{\perp} = \{a \in A\mid \varphi(ta)=0, \text{ for all } t \in I\}
     \]
    Hence, we get $\ann_A(I) = I^{\perp}$.

\begin{proposition}\label{Prop:PerpAndCheck}
  Let $A=R/J$ be an AG $k$-algebra, and $Q \subseteq S \subseteq A$ be ideals in $A$. Then ${(S/Q)}^{\widecheck{}} \cong Q^{\perp}/S^{\perp}.$
\end{proposition}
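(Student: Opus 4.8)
The plan is to produce a single explicit $k$-linear map whose kernel and image can be read off directly, rather than dualizing a short exact sequence. First observe that the quotient $Q^{\perp}/S^{\perp}$ even makes sense: since $Q \subseteq S$, Proposition \ref{Prop:PerpAnn}(ii) gives $S^{\perp} \subseteq Q^{\perp}$. Recall also that, for the non-degenerate bilinear map $\varphi$ from Lemma \ref{Lem:nonDegenerate}, we have $Q^{\perp} = \ann_A(Q)$. I would then define $\Phi \colon Q^{\perp} \to (S/Q)^{\widecheck{}}$ by setting, for $a \in Q^{\perp}$ and $s \in S$, the functional $\Phi(a)(s+Q) := \varphi(as)$. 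This is well defined because if $s - s' \in Q$ then $\varphi(a(s-s')) = 0$, as $a \in Q^{\perp} = \ann_A(Q)$; and $\Phi$ is manifestly $k$-linear.

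Next I would compute the kernel: $\Phi(a) = 0$ means $\varphi(as) = 0$ for all $s \in S$, i.e. $a \in S^{\perp}$. Since $S^{\perp} \subseteq Q^{\perp}$, this yields $\ker \Phi = S^{\perp}$, so $\Phi$ descends to an injection $\overline{\Phi} \colon Q^{\perp}/S^{\perp} \hookrightarrow (S/Q)^{\widecheck{}}$. It then remains to show $\overline{\Phi}$ is onto, which is the crux of the argument. Given $f \in (S/Q)^{\widecheck{}}$, pull it back along $S \twoheadrightarrow S/Q$ to a functional $\tilde f \in S^{\widecheck{}}$ with $\tilde f|_Q = 0$. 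Non-degeneracy of $\varphi$ makes the map $A \to A^{\widecheck{}}$, $a \mapsto \varphi(a, \cdot)$, an isomorphism; composing with the restriction map $A^{\widecheck{}} \to S^{\widecheck{}}$ (which is surjective, being dual to the inclusion $S \hookrightarrow A$) produces $a \in A$ with $\varphi(as) = \tilde f(s)$ for all $s \in S$. Since $\tilde f$ kills $Q$, such an $a$ lies in $Q^{\perp}$, and $\Phi(a) = f$; hence $\overline{\Phi}$ is an isomorphism.

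The one genuinely non-formal step is this surjectivity, and it is exactly where non-degeneracy is indispensable; everything else is bookkeeping. If one prefers to avoid the extension-of-functionals argument, the same conclusion follows from a dimension count: since $\varphi$ is non-degenerate, Proposition \ref{Prop:ColonPerp}(ii) applies with $V = W = A$ to give $\dim Q^{\perp} = \dim A - \dim Q$ and $\dim S^{\perp} = \dim A - \dim S$, whence $\dim\bigl(Q^{\perp}/S^{\perp}\bigr) = \dim S - \dim Q = \dim(S/Q) = \dim (S/Q)^{\widecheck{}}$. As $\overline{\Phi}$ is an injection between finite-dimensional spaces of equal dimension, it is automatically an isomorphism. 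I would likely present the explicit surjectivity version as the main line and mention the dimension count as the quick alternative.
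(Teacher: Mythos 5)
Your proof is correct and follows essentially the same route as the paper: the same map $Q^{\perp} \to (S/Q)^{\widecheck{}}$, $a \mapsto \varphi(a\,\cdot)$, the same kernel computation giving $\ker = S^{\perp}$, and your fallback dimension count (via Proposition \ref{Prop:ColonPerp}) is precisely the paper's surjectivity argument. The only divergence is that your main line proves surjectivity by explicitly extending a functional from $S$ to $A$ and invoking non-degeneracy of $A \to A^{\widecheck{}}$, a valid and slightly more self-contained alternative to the dimension count.
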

\begin{proof} Let $\varphi:A \to k$ be a $k$-linear map corresponding to $A$ obtained from Lemma \ref{Lem:nonDegenerate}.
    We have a natural $k$-linear map $\psi: Q^{\perp} \to {(S/Q)}^{\widecheck{}}$
    given by $$\psi(a) = \psi_a : S/Q \to k$$ where $\psi_a(s+Q) = \varphi(sa)$ for $s \in S.$ Since $a \in Q^\perp,$ $\psi_a$ is well defined.
     Note that 
     $$\ker \psi = \{a \in Q^{\perp} \mid \psi(a)=\psi_a=0\} = \{a \in Q^{\perp} \mid \varphi(sa)=0 \text{ for all } s \in S\} = S^{\perp} \cap Q^{\perp} = S^{\perp}.$$

    \noindent Hence, we have an injective map $\overline{\psi}:$ $Q^{\perp}/S^{\perp} \to {(S/Q)}^{\widecheck{}}$. Further,
    \begin{equation*}
        \begin{split}
            \dim(Q^{\perp}/S^{\perp}) & = \dim Q^{\perp} -\dim{S}^{\perp} \\
                                      & = \dim A - \dim Q - (\dim A - \dim S) \quad (\text{by Proposition } \ref{Prop:ColonPerp}) \\
                                      & = \dim S - \dim Q \\
                                      & = \dim (S/Q) \\
                                      & = \dim (S/Q)^{\widecheck{}}.
        \end{split}
    \end{equation*}
    Hence $\overline{\psi}$ is an isomorphism. Thus ${(S/Q)}^{\widecheck{}} \cong Q^{\perp}/S^{\perp}$.
\end{proof}

\subsection{$b$-admissible sequences} In this section, we recall a result by C. Blancafort and S. Nollet which characterizes the Hilbert functions of standard graded algebras over an Artinian local ring that need not be a field.
We remark that possible Hilbert functions of standard graded algebras over a field are classically known due to Macaulay \cite{Macaulay1927}. 

\begin{theorem}[\cite{BN99}, Theorem 2.3]\label{Thm:BN96}
Let $R_0$ be an Artinian ring, $R=R_0[x_1, \ldots, x_b]$, and $H: \mathbb{N} \to \mathbb{N}$ be a function. Define the functions $q$ and $r$ by the Euclidean division as $H(n)=
\binom{n+b-1}{b-1} q(n) + r(n)$ where $0\leq r(n) < \binom{n+b-1}{b-1}.$ Then $H$ is the Hilbert function of $R/I$ for a homogeneous ideal $I \subset R$ if and only if
\begin{enumerate}[(i)]
    \item $H(0) \leq \ell(R_0)$;
    \item $H(n+1) \leq \binom{n+b}{b-1}q(n) + {r(n)_n}_{+}^{+}$ for all $n \geq 0.$ 
\end{enumerate}
Here for the $n^{th}$ Macaulay representation ${r(n)_n} = \binom{k(n)}{n} + \cdots + \binom{k(1)}{1}$ of $r(n),$  
$${r(n)_n}_{+}^{+}:=\binom{k(n)+1}{n+1} + \cdots + \binom{k(1)+1}{1+1}.$$ 
\end{theorem}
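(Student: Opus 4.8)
The plan is to reduce the statement to Macaulay's classical theorem over a field, which governs the growth inside a single ``layer,'' together with a composition series of the Artinian base $R_0$ that splits $R/I$ into finitely many field-coefficient pieces. Throughout, write $N_n = \binom{n+b-1}{b-1}$ for the number of degree-$n$ monomials, so that $R_n$ is a free $R_0$-module of rank $N_n$ and $\ell(R_n) = N_n\,\ell(R_0)$. The Euclidean division $H(n) = N_n\,q(n) + r(n)$ is to be read as filling $q(n)$ complete free layers plus a partial layer of length $r(n)$. A first useful observation is that condition (ii) forces $q$ to be non-increasing: since Macaulay's bound gives ${r(n)_n}_{+}^{+} < N_{n+1}$ whenever $r(n) < N_n$, condition (ii) yields $H(n+1) < N_{n+1}(q(n)+1)$, and comparing with $H(n+1) = N_{n+1}\,q(n+1) + r(n+1)$ gives $q(n+1) \le q(n)$.

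Next I would fix a composition series $0 = F_0 \subset F_1 \subset \cdots \subset F_\ell = R_0$ of the $R_0$-module $R_0$, with $F_i/F_{i-1} \cong \kappa_i$ a residue field and $\ell = \ell(R_0)$. Since $R$ is flat over $R_0$, extending to $R$ gives a filtration $0 = F_0 R \subseteq \cdots \subseteq F_\ell R = R$ with $\bar R_i := F_i R/F_{i-1}R \cong \kappa_i \otimes_{R_0} R \cong \kappa_i[x_1,\dots,x_b]$. For a homogeneous ideal $I$, the induced filtration $G_i := (F_i R + I)/I$ of $R/I$ then has successive quotients $G_i/G_{i-1} \cong \bar R_i / \bar I_i$, where $\bar I_i$ is the image of $I \cap F_i R$, a homogeneous ideal of $\bar R_i$. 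As a $\kappa_i$-vector space of dimension $d$ has length $d$ over $R_0$, summing lengths along the filtration gives the decomposition
\[
H(n) = \sum_{i=1}^{\ell} h^{(i)}(n), \qquad h^{(i)}(n) := \dim_{\kappa_i}(\bar R_i/\bar I_i)_n,
\]
where each $h^{(i)}$ is the Hilbert function of a standard graded quotient of a polynomial ring in $b$ variables over a field, hence an $O$-sequence satisfying Macaulay's growth bound $h^{(i)}(n+1) \le {h^{(i)}(n)_n}_{+}^{+}$ with $h^{(i)}(n) \le N_n$.

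For \emph{necessity}, condition (i) is immediate from $H(0) = \ell(R_0/I_0) \le \ell(R_0)$, and (ii) becomes a purely combinatorial statement about the decomposition above: for non-negative integers $h^{(i)}(n)$, each at most $N_n$, summing to $H(n) = N_n\,q(n) + r(n)$ and each growing by at most ${h^{(i)}(n)_n}_{+}^{+}$, the total $\sum_i {h^{(i)}(n)_n}_{+}^{+}$ is at most $N_{n+1}\,q(n) + {r(n)_n}_{+}^{+}$. I would prove this by a smoothing argument: the map $a \mapsto {a_n}_{+}^{+}$ is discretely convex and equals $N_{n+1}$ exactly at $a = N_n$, so under the constraints $\sum_i h^{(i)}(n) = H(n)$ and $0 \le h^{(i)}(n) \le N_n$ the sum of the bounds is maximized at the greedy configuration with $q(n)$ layers equal to $N_n$, one layer equal to $r(n)$, and the rest $0$; its value is precisely the right-hand side of (ii). I expect this convexity/superadditivity lemma for the Macaulay operator to be the main technical obstacle, since all the genuine content of the growth estimate is concentrated there, the field case being used as a black box.

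For \emph{sufficiency}, given $H$ satisfying (i) and (ii), I would realize it by a layered lex ideal. Using the composition series above and a lexicographic order on the degree-$n$ monomials, I would define $I_n \subseteq R_n$ so that $R_n/I_n$ retains exactly $q(n)$ full composition layers across all monomials together with one further layer on the lexicographically largest $r(n)$ monomials; the length count then forces $\ell(R_n/I_n) = H(n)$. It remains to check that $\bigoplus_n I_n$ is an ideal, i.e.\ $x_j I_n \subseteq I_{n+1}$ for each variable: on the complete layers this uses that $q$ is non-increasing, so degree $n+1$ carries at least as many full layers inside $I$, while on the partial layer it reduces, via the classical fact that the image of a lex segment lands in the next lex segment, to the inequality $r(n+1) \le {r(n)_n}_{+}^{+}$ supplied by (ii), after absorbing the extra full layers created when $q$ drops. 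Carrying this out degree by degree, and enlarging $I_{n+1}$ whenever $H(n+1)$ lies strictly below the maximal growth, yields a homogeneous ideal with Hilbert function exactly $H$; here too the only delicate point is the verification of the ideal property, which again rests on the same single-layer Macaulay estimate.
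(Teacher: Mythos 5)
First, a point of comparison: the paper offers no proof of this statement at all --- it is imported verbatim from \cite{BN99} and used as a black box --- so your argument can only be judged on its own terms. Your overall reduction is the right one and matches the strategy of the source: filter $R/I$ by a composition series $0=F_0\subset\cdots\subset F_\ell=R_0$ extended to $R$, identify each slice $G_i/G_{i-1}$ with $\kappa_i[x_1,\dots,x_b]/\bar I_i$, apply Macaulay's theorem over a field to each slice, and reduce condition (ii) to a combinatorial inequality for the operator $a\mapsto {a_n}_{+}^{+}$; the converse via layered lex ideals is likewise the standard construction.

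The genuine gap sits exactly in the step you flagged as the technical heart. The map $a\mapsto {a_n}_{+}^{+}$ is \emph{not} discretely convex: for $n=2$ its values at $a=0,1,\dots,6$ are $0,1,2,4,5,7,10$, so the first differences $1,1,2,1,2,3$ are not monotone (they reset each time the leading binomial of the Macaulay representation jumps). Consequently the proposed smoothing toward extremes can strictly decrease the objective: with $b=3$, $n=2$ (so $N_2=6$), replacing $(3,3)$ by $(4,2)$ sends ${3_2}_{+}^{+}+{3_2}_{+}^{+}=8$ down to ${4_2}_{+}^{+}+{2_2}_{+}^{+}=7$. Hence the assertion that the constrained maximum of $\sum_i {h^{(i)}(n)_n}_{+}^{+}$ is attained at the greedy configuration does not follow from local exchanges driven by convexity; the assertion is true, but it must be proved differently, e.g.\ via the two estimates ${a_n}_{+}^{+}+{c_n}_{+}^{+}\le {(a+c)_n}_{+}^{+}$ for $a+c\le N_n$ and ${a_n}_{+}^{+}+{c_n}_{+}^{+}\le N_{n+1}+{(a+c-N_n)_n}_{+}^{+}$ for $N_n\le a+c\le 2N_n$, applied inductively to merge the layers two at a time; establishing these identities on Macaulay representations is precisely the content of the key lemma in \cite{BN99}. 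A second, smaller gap: in the sufficiency direction, when $q(n+1)<q(n)$ condition (ii) only yields $r(n+1)\le N_{n+1}\bigl(q(n)-q(n+1)\bigr)+{r(n)_n}_{+}^{+}$, so the degree-$(n+1)$ partial segment may have to spill into a newly vacated full layer, and the verification that the resulting graded module is still an ideal needs to be written out in that case.
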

\noindent Note that the integer $b$ is the embedding dimension of the ring $R$. A numerical function $H$ satisfying the equivalent condition above is called a {\it $b$-admissible sequence}. 

\section{Symmetric decomposition of the Hilbert function of $G(I)$}
Let $(R, \mathcal{M})$ be a local ring over a field $k$ with  $k = R/\mathcal{M}$ and $J$ be an ideal in $R$ such that  $A=R/J$ is an AG $k$-algebra. Let $I$  be an ideal in $A$. We define the {\it end degree of $I$} as the least integer $u$ such that $I^u \neq 0 $ and $I^{u+1}=0$. 
Then we define the following decreasing sequence of ideals of $G(I)$ as follows:
$$G(I)=C(0) \supseteq C(1) \supseteq \cdots \supseteq C(u+1)=  0$$
where $C(a)$ is a graded ideal of $G(I)$ with $i^{th}$ component given by 
\begin{equation}\label{EQ:C(a)i}
    C(a)_i = \dfrac{(0:I^{u+1-a-i}) \cap I^i}{(0:I^{u+1-a-i}) \cap I^{i+1}} \cong \dfrac{(0:I^{u+1-a-i}) \cap I^i +I^{i+1}}{I^{i+1}}.
\end{equation}

\noindent Define $Q(a) := C(a)/C(a+1)$. 
We note that
\begin{equation*}
    \begin{split}
        Q(a)_v  & = C(a)_v /C(a+1)_v \\
                & = \dfrac{[(0:I^{u+1-a-v}) \cap I^v +I^{v+1}]/I^{v+1}}{[(0:I^{u+1-a-1-v}) \cap I^v +I^{v+1}]/I^{v+1}}.
    \end{split}
\end{equation*}
For simplification, let $l = u+1-a-v.$ Then
\begin{equation}\label{EQ:QDefinition}
    \begin{split}
        Q(a)_v & = \dfrac{(0:I^{l}) \cap I^v +I^{v+1}}{(0:I^{l-1}) \cap I^v +I^{v+1}}.
    \end{split}
\end{equation}
Since $(0:I^{l-1}) \subseteq (0:I^l),$  we can write 
\[(0:I^{l}) \cap I^v +I^{v+1} =(0:I^{l}) \cap I^v  + (0:I^{l-1}) \cap I^v +I^{v+1}.\]
Therefore
\begin{equation}\label{EQ:Q(a)}
    \begin{split}
        Q(a)_v & = \dfrac{(0:I^{l}) \cap I^v  + (0:I^{l-1}) \cap I^v +I^{v+1}}{(0:I^{l-1}) \cap I^v +I^{v+1}}.
    \end{split}
\end{equation}
Let $M := (0:I^{l}) \cap I^v$, $N := (0:I^{l-1}) \cap I^v +I^{v+1}$. Then $Q(a)_v = (M+N)/N \cong M/(M\cap N).$

\noindent Now 
\begin{equation}\label{EQ:MandN}
    \begin{split}
        M \cap N & = \left( (0:I^l) \cap I^v \right) \cap \left( I^{v+1} + (0:I^{l-1}) \cap I^v \right) \\
                & = \left[ (0:I^l) \cap I^v \cap I^{v+1} \right] +  \left[ (0:I^{l-1}) \cap I^v \right] \quad (\mbox{since } (0:I^l) \cap I^v \supseteq (0:I^{l-1}) \cap I^v) \\
                & = (0:I^l) \cap I^{v+1} + (0:I^{l-1}) \cap I^v.
    \end{split} 
\end{equation}
Hence
\begin{equation}\label{EQ:Q}
    \begin{split}
        Q(a)_v & \cong \dfrac{I^v \cap \, (0: I^l)}{I^{v+1} \cap \, (0:I^l) +I^v \cap \, (0:I^{l-1})}
    \end{split}
\end{equation}
where $l = u+1 -a-v.$

\noindent The following lemma is a key tool for proving our main results. 
\begin{lemma}\label{Lemma:colonPerpLemma}
  Let $A=R/J$ be an AG $k$-algebra,  $I$ an ideal of $A$, and $\phi$ be a map corresponding to $A$ as in Lemma \ref{Lem:nonDegenerate}. Then $\ann_A(I)=I^{\perp} = (0:I)$, and $I = (0:I)^{\perp}.$
\end{lemma}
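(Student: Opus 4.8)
The plan is to prove the two asserted equalities $\ann_A(I) = I^\perp = (0:I)$ and $I = (0:I)^\perp$ by combining the definition of the bilinear form $\varphi$ with the nondegeneracy supplied by Lemma \ref{Lem:nonDegenerate} and the perp/annihilator machinery of Proposition \ref{Prop:ColonPerp}. First I would dispose of the equality $\ann_A(I) = I^\perp$, which is essentially a restatement: since $\varphi(at) = \varphi(ta)$ because $A$ is commutative, the two defining conditions ``$\varphi(at)=0$ for all $t\in I$'' and ``$\varphi(ta)=0$ for all $t\in I$'' coincide, so $\ann_A(I) = I^\perp$ with no real content — this is exactly the observation already recorded in the excerpt just after Lemma \ref{Lem:nonDegenerate}.

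The substantive step is to identify $I^\perp$ with the ordinary ideal quotient $(0:I) = \{a\in A : aI = 0\}$. First I would show $(0:I)\subseteq I^\perp$: if $aI=0$ then in particular $at=0$ for every $t\in I$, so $\varphi(at)=\varphi(0)=0$, giving $a\in I^\perp$. For the reverse inclusion $I^\perp\subseteq (0:I)$, suppose $a\in I^\perp$, so $\varphi(at)=0$ for all $t\in I$. The goal is to upgrade this from ``$\varphi$ kills $at$'' to ``$at = 0$''. The key point is that $aI$ is itself an ideal (or at least that for a fixed element $x = at\in aI$ we can test $x$ against all of $A$): for any $b\in A$ and $t\in I$ we have $bt\in I$, hence $\varphi\bigl(a(bt)\bigr)=0$, i.e.\ $\varphi\bigl((at)b\bigr)=0$ for all $b\in A$. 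By nondegeneracy of $\varphi$ (Lemma \ref{Lem:nonDegenerate}), an element pairing to zero with every element of $A$ must vanish, so $at=0$. As $t\in I$ was arbitrary, $aI=0$, i.e.\ $a\in(0:I)$. This establishes $I^\perp = (0:I)$, and hence all three descriptions agree.

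For the final claim $I = (0:I)^\perp$, I would invoke Proposition \ref{Prop:ColonPerp}(iii), the double-perp identity for a nondegenerate form. Applying that result with $V=W=A$ and $S=I$, we get $\ann_A(I^\perp)=I$; rewriting $\ann_A(\,\cdot\,)$ as $(\,\cdot\,)^\perp$ (legitimate by commutativity, exactly as above) and substituting the identity $I^\perp=(0:I)$ just proved yields $(0:I)^\perp = \ann_A\bigl((0:I)\bigr)=\ann_A(I^\perp)=I$, as desired. Alternatively, if one prefers to avoid quoting (iii) and argue directly, the inclusion $I\subseteq (0:I)^\perp$ is immediate from the definitions, and equality then follows by a dimension count: $\dim(0:I)^\perp = \dim A - \dim(0:I)$ by Proposition \ref{Prop:ColonPerp}(ii), while $\dim(0:I) = \dim I^\perp = \dim A - \dim I$ by the same proposition applied to $I$, so $\dim (0:I)^\perp = \dim I$, forcing the containment to be an equality.

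I do not expect a serious obstacle here; the only place that requires genuine care is the inclusion $I^\perp\subseteq(0:I)$, where one must not confuse the condition ``$\varphi(at)=0$ for all $t\in I$'' with ``$at=0$''. The trick is to feed the whole ideal $I$ (closed under multiplication by $A$) into the pairing so that nondegeneracy can be applied elementwise, and this is the single idea that makes the lemma work.
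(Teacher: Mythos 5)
Your proposal is correct and follows essentially the same route as the paper: the inclusion $I^{\perp}\subseteq(0:I)$ is obtained by testing $\varphi$ against $aIA=aI$ and invoking non-degeneracy (your elementwise phrasing with $bt\in I$ is exactly the paper's compressed ``$\varphi(aIA)=\varphi(aI)=0$'' step), and the final identity $I=(0:I)^{\perp}$ is deduced from the double-perp statement of Proposition \ref{Prop:ColonPerp}(iii) just as in the paper. No gaps.
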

\begin{proof}
  By definition $\ann_A(I) = \{ a \in A \mid \varphi(aI) =0 \}.$ 
  Let $a \in \ann_A(I).$ Then
  \[ \varphi(aIA) = \varphi(aI) =0.\] 
  Since $\varphi $ is non-degenerate, we get $aI = 0$. Hence $a \in (0 : I)$. 
  \par Conversely, if $x  \in (0 : I)$, then $\varphi(xI) = \varphi(0) = 0$. Therefore $x \in \ann_A(I)$. Hence $\ann_A(I)= (0:I)$. As $\ann_A(I)=I^{\perp}, $ we get $I^\perp=(0:I).$ 
  
  Thus $(\ann_A(I))^\perp=(0:I)^\perp.$ Since $(\ann_A(I))^\perp=I$ by Proposition \ref{Prop:ColonPerp} (ii), $I=(0:I)^\perp.$  
\end{proof}
\noindent Next, we prove the main technical result that shows that the Hilbert function of $Q(a)$ is symmetric about $(u-a)/2.$

\begin{proposition}\label{Prop:technical}
  Let $A=R/J$ be an AG  $k$-algebra, $I$  an ideal in $A$ with the end degree $u$. Then for any $a \geq 0$, 
   \begin{equation*}
        {Q(a)_v}^{\widecheck{}}:= \Hom(Q(a), k)_v \cong Q(a)_{u-a-v}.
      \end{equation*}
\end{proposition}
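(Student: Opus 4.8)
The plan is to establish the claimed duality by computing $Q(a)_v^{\widecheck{}}$ directly from the explicit description of $Q(a)_v$ in equation \eqref{EQ:Q} and then identifying the result with $Q(a)_{u-a-v}$ via the same description. The main tool will be Proposition \ref{Prop:PerpAndCheck}, which tells us that for ideals $Q \subseteq S$ of $A$ we have $(S/Q)^{\widecheck{}} \cong Q^{\perp}/S^{\perp}$. Since equation \eqref{EQ:Q} already exhibits $Q(a)_v$ as a quotient of $A$-submodules of the form $(\text{numerator})/(\text{denominator})$, I would apply Proposition \ref{Prop:PerpAndCheck} with $S = I^v \cap (0:I^l)$ and $Q = I^{v+1} \cap (0:I^l) + I^v \cap (0:I^{l-1})$, where $l = u+1-a-v$.

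First I would dualize: by Proposition \ref{Prop:PerpAndCheck}, $Q(a)_v^{\widecheck{}} \cong Q^{\perp}/S^{\perp}$. The next step is to compute each of $S^\perp$ and $Q^\perp$ using the duality dictionary from Proposition \ref{Prop:ColonPerp}(iv) and Lemma \ref{Lemma:colonPerpLemma}. The key identities I expect to use repeatedly are $(0:I^j)^{\perp} = I^j$ (a consequence of Lemma \ref{Lemma:colonPerpLemma} applied to $I^j$, since $\ann_A(I^j) = (0:I^j)$ and $I^j = (0:I^j)^{\perp}$) and, dually, $(I^j)^{\perp} = (0:I^j)$. Combined with the intersection-to-sum rule $(S \cap O)^{\perp} = S^{\perp} + O^{\perp}$ from Proposition \ref{Prop:ColonPerp}(iv), the perp of $S = I^v \cap (0:I^l)$ should become $(I^v)^{\perp} + (0:I^l)^{\perp} = (0:I^v) + I^l$. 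Similarly, $Q$ is a sum of two intersections, so its perp becomes an intersection of the corresponding sums, which I expect to simplify to $\big((0:I^{v+1}) + I^l\big) \cap \big((0:I^v) + I^{l-1}\big)$.

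The heart of the argument is then a bookkeeping step: I would substitute $l = u+1-a-v$ and compare the resulting expression for $Q^{\perp}/S^{\perp}$ against the description of $Q(a)_{u-a-v}$ obtained by plugging $v' = u-a-v$ (so that $l' = u+1-a-v' = v+1$) into equation \eqref{EQ:Q}. After renaming, $Q(a)_{u-a-v}$ is the quotient with numerator $I^{u-a-v} \cap (0:I^{v+1})$ and denominator $I^{u-a-v+1} \cap (0:I^{v+1}) + I^{u-a-v} \cap (0:I^v)$. The goal is to match this, after taking perps, with the $Q^\perp/S^\perp$ computed above. I anticipate that the two modules will not be literally identical as written, but will agree after using the modular law and the nesting of the ideals $I^j$ and $(0:I^j)$ to rewrite sums of intersections.

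The hard part will be this final matching, because the perp operation turns the "intersection over sum'' shape of $Q(a)_v$ into a "sum over intersection'' shape, and reconciling that with the "intersection over sum'' shape of $Q(a)_{u-a-v}$ requires carefully applying the modular law $X \cap (Y + Z) = (X\cap Y) + Z$ when $Z \subseteq X$, together with the distributivity relations in Proposition \ref{Prop:ColonPerp}(iv). I would organize the verification by first isolating the numerators on both sides, showing $S^{\perp}$ matches the denominator of the dual description, and then confirming that $Q^{\perp}$ matches the numerator; the symmetry $u-a-v \leftrightarrow v$ in the exponents is what makes the two sides coincide, and keeping the substitution $l = u+1-a-v$ consistent throughout will be the principal source of care.
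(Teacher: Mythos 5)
Your plan is correct and follows essentially the same route as the paper: dualize the presentation \eqref{EQ:Q} via Proposition \ref{Prop:PerpAndCheck}, convert perps of sums and intersections using Proposition \ref{Prop:PerpAnn}(iv), Proposition \ref{Prop:ColonPerp}(iv) and Lemma \ref{Lemma:colonPerpLemma} to arrive at $\bigl((0:I^{v+1})+I^l\bigr)\cap\bigl((0:I^v)+I^{l-1}\bigr)$ over $(0:I^v)+I^l$, and then identify this with $Q(a)_{u-a-v}$. The final reconciliation you defer is exactly what the paper carries out: repeated applications of the modular law using the inclusions $(0:I^v)\subseteq(0:I^{v+1})$ and $I^{l}\subseteq I^{l-1}$, plus one use of the isomorphism $(M+N)/N\cong M/(M\cap N)$ --- so $S^{\perp}$ and $Q^{\perp}$ do not literally equal the denominator and numerator of $Q(a)_{u-a-v}$, but the quotients agree, as you anticipated.
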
  
\begin{proof}
  Let $l = u+1 - (a+v)$. By Equation \eqref{EQ:Q}, we have
  
  \begin{equation*}
    \begin{split}
      {Q(a)_v}^{\widecheck{}} & \cong {\left(\frac{I^v \cap \, (0: I^l)}{I^{v+1} \cap \, (0:I^l) +I^v \cap \, (0:I^{l-1})}\right)}^{\widecheck{}} \\
      & \cong \, \frac{{\left(I^{v+1} \cap \, (0:I^l) + I^v \cap \, (0:I^{l-1})\right)}^{\perp}}{{\left(I^v \cap \, (0:I^l)\right)}^{\perp}} \quad   \hspace{7em} (\text{by Proposition } \ref{Prop:PerpAndCheck}) \\
      & = \, \frac{{\left(I^{v+1} \cap \, (0:I^l)\right)}^{\perp} \cap \, {\left( I^v \cap \, (0:I^{l-1})\right)}^{\perp}}{{\left(I^v \cap \,  (0:I^l)\right)}^{\perp}} \quad \hspace{5.3em} (\text{by Proposition } \ref{Prop:PerpAnn} \text{ (iv)})\\
      & = \, \frac{\left((I^{v+1})^{\perp} + (0:I^l)^{\perp}\right) \cap \,  \left((I^v)^{\perp} + \, (0:I^{l-1})^{\perp}\right)}{(I^v)^{\perp} +  (0:I^l)^{\perp}} \quad \hspace{2.7em} (\text{by Proposition } \ref{Prop:ColonPerp}\text{ (iv)}) \\
      & = \, \frac{\left( (0:I^{v+1}) +I^l  \right) \cap \, \left( (0: I^v) + I^{l-1}\right)}{(0:I^v) +I^l} \quad \hspace{7.1em} (\text{by Lemma } \ref{Lemma:colonPerpLemma}) \\
      & = \,   \frac{(0:I^{v+1}) \cap \left[ (0: I^v) + I^{l-1}\right] + I^l}{(0:I^v) +I^l} \quad \hspace{8.3em}(\text{since } I^{l} \subseteq(0:I^v)+ I^{l-1}) \\
      & =\,  \frac{ (0: I^v) + I^{l-1} \cap \, (0: I^{v+1}) + I^l }{(0:I^v) +I^l}  \quad \hspace{9.1em} (\text{since } (0:I^v) \subseteq (0: I^{v+1}))\\
      &  = \,  \frac{  I^{l-1} \cap \, (0: I^{v+1}) +(0: I^v) + I^l}{(0:I^v) +I^l}   \\
      & \cong \, \frac{ I^{l-1} \cap \, (0: I^{v+1})}{\left[ (0: I^v) + I^{l}\right] \cap \, \left[I^{l-1}\cap \,(0:I^{v+1}) \right] } \\
      & = \, \frac{ I^{l-1} \cap \, (0: I^{v+1})}{\left[\left[ (0: I^v) + I^{l}\right] \cap \, (0:I^{v+1})\right] \cap \, I^{l-1} } \\
     &  = \, \frac{ I^{l-1} \cap \, (0: I^{v+1})}{ \,\left[  (0: I^v) + I^{l} \cap (0:I^{v+1})  \, \right]  \cap \, I^{l-1} } \quad \hspace{7.5em}(\text{since } (0:I^v) \subseteq (0: I^{v+1})) \\ 
    & = \, \frac{ I^{l-1} \cap \, (0: I^{v+1})}{ (0: I^v) \cap \, I^{l-1}  + I^{l} \cap (0:I^{v+1}) } \quad \hspace{8.5em}(\text{since } I^{l} \cap (0:I^{v+1}) \subseteq I^{l-1}) \\
           & =Q(a)_{l-1}. \quad \hspace{19em}(\text{by Equation }\ref{EQ:Q}) \\
           & =  Q(a)_{u-a-v} \quad \hspace{18em}(\text{since } l = u+1 - (a+v)).     
      \end{split}
   \end{equation*}

\end{proof}

\noindent The following is one of the main results of the paper:
\begin{theorem}\label{Thm:SymmetricDecomposition}
    Let $A =R/J$ be an AG $k$-algebra, and $I$ an ideal in $A$ with the end degree $u$. Then the Hilbert function of $G(I)$, $$\HF({G(I)})_i = \sum_{a= 0}^{u} H(a)_i,$$
    where $H(a)$ denotes the Hilbert function of $Q(a)$. Moreover, $H(a)$ is symmetric about $(u-a)/2.$
\end{theorem}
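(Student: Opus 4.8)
The plan is to obtain the decomposition as the dimension (equivalently, length) count associated to the finite descending chain $G(I)=C(0)\supseteq C(1)\supseteq\cdots\supseteq C(u+1)=0$ of graded ideals, and to deduce the symmetry of each summand $H(a)$ straight from Proposition \ref{Prop:technical}. One preliminary remark streamlines everything: although $G(I)_0=A/I$ is not a field, it is an Artinian local ring with residue field $k$, so every composition factor of a finite-length $A/I$-module is isomorphic to $k$. Consequently $\ell_{A/I}(M)=\dim M$ for every such module $M$, and in particular $\HF(G(I))_i=\dim G(I)_i$ and $H(a)_i=\dim Q(a)_i$. I will therefore work with $k$-dimensions throughout.

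First I would identify the two ends of the chain, as this is the only place the end-degree hypothesis is used. For $C(0)=G(I)$: in degree $i$ the relevant exponent is $u+1-i$, and whenever $0\le i\le u$ we have $I^i\cdot I^{u+1-i}\subseteq I^{u+1}=0$, so $I^i\subseteq(0:I^{u+1-i})$ and hence $C(0)_i=I^i/I^{i+1}=G(I)_i$; for $i>u$ both $C(0)_i$ and $G(I)_i$ vanish because $I^{u+1}=0$. For $C(u+1)=0$: adopting the convention $I^n=A$ for $n\le 0$, the exponent $u+1-(u+1)-i=-i$ is nonpositive, so $(0:I^{-i})=(0:A)=0$ and every $C(u+1)_i$ vanishes. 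These are the only nontrivial verifications for the decomposition itself.

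Next I would fix a degree $i$ and pass to $i$-th graded components, turning the chain of graded ideals into a descending filtration of finite-length $A/I$-modules
$$G(I)_i=C(0)_i\supseteq C(1)_i\supseteq\cdots\supseteq C(u+1)_i=0,$$
whose successive quotients are $C(a)_i/C(a+1)_i=Q(a)_i$ by the definition of $Q(a)$. Additivity of length (equivalently, of $k$-dimension) along a finite filtration then yields
$$\HF(G(I))_i=\dim C(0)_i=\sum_{a=0}^{u}\dim\bigl(C(a)_i/C(a+1)_i\bigr)=\sum_{a=0}^{u}H(a)_i,$$
which is the asserted decomposition.

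Finally, for the symmetry of each $H(a)$ I would simply invoke Proposition \ref{Prop:technical}, which supplies the isomorphism $Q(a)_v^{\widecheck{}}\cong Q(a)_{u-a-v}$ for all $v$. Since a finite-dimensional $k$-vector space and its dual have the same dimension, this gives $H(a)_v=\dim Q(a)_v=\dim Q(a)_v^{\widecheck{}}=\dim Q(a)_{u-a-v}=H(a)_{u-a-v}$, i.e. $H(a)$ is symmetric about $(u-a)/2$. I do not expect any real obstacle at this stage: the entire conceptual weight of the theorem sits in Proposition \ref{Prop:technical}, and what remains is the bookkeeping of the filtration together with the endpoint identifications, the only delicate points being the passage from length to $k$-dimension over the non-field $A/I$ and the conventions for nonpositive exponents of $I$ in the boundary terms.
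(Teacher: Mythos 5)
Your proposal is correct and follows essentially the same route as the paper: additivity of length along the filtration $G(I)=C(0)\supseteq\cdots\supseteq C(u+1)=0$ gives the decomposition, and Proposition \ref{Prop:technical} gives the symmetry of each $H(a)$. The extra checks you supply (that $C(0)=G(I)$, that $C(u+1)=0$, and that length equals $k$-dimension) are details the paper leaves implicit in the construction of the chain, not a different argument.
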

\begin{proof}
We have $G(I)=C(0) \supseteq C(1) \supseteq \cdots \supseteq C(u+1)=  0$. By considering the Hilbert function, we obtain $\HF({G(I)})_i = \sum_{a = 0}^{u} H(a)_i$.   Further, by Proposition \ref{Prop:technical}, we have \[Q(a)_v \cong Q(a)_{u-a-v}.\] 
Hence $H(a)_v = H(a)_{u-a-v}.$ This proves the result.
\end{proof}

\begin{definition}
   Let $H$ be a $b$-admissible sequence. We say $H$ is an {\it $I$-Gorenstein sequence} if there exists a Gorenstein ring $A$ and an ideal $I\subset A$ such that the Hilbert function of $G(I)$ is $H$.
\end{definition}

\begin{corollary}\label{Cor:HuAtmostH0}
    Let $H = (h_0, h_1, \ldots, h_u)$ be a $b$-admissible sequence. If $h_u > h_0$, then $H$ is not an $I$-Gorenstein sequence.
\end{corollary}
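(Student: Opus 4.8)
The plan is to read off the inequality $h_u \le h_0$ directly from the symmetric decomposition in Theorem \ref{Thm:SymmetricDecomposition}, so that the hypothesis $h_u > h_0$ produces an immediate contradiction.

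First I would argue by contradiction: suppose $H = (h_0, \ldots, h_u)$ is $I$-Gorenstein, so there exist an AG $k$-algebra $A$ and an ideal $I \subseteq A$ with $\HF(G(I))_i = h_i$ for all $i$. Since $h_u \ne 0$ while $h_{u+1} = 0$, the last nonzero graded piece of $G(I)$ sits in degree $u$, so $I$ has end degree exactly $u$. Theorem \ref{Thm:SymmetricDecomposition} then applies and yields $h_i = \sum_{a=0}^{u} H(a)_i$, where each $H(a)$ (the Hilbert function of $Q(a)$) is symmetric about $(u-a)/2$.

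The key structural observation is that each summand $H(a)$ is supported only in degrees $0 \le v \le u-a$. Indeed $H(a)$ is a Hilbert function, hence nonnegative, and symmetry about $(u-a)/2$ means $H(a)_v = H(a)_{u-a-v}$, which can be nonzero only when both $v \ge 0$ and $u-a-v \ge 0$. Evaluating the decomposition in the top degree $i = u$, the support bound $u \le u-a$ forces $a = 0$, so the only surviving term is $H(0)$ and thus $h_u = H(0)_u$; symmetry of $H(0)$ about $u/2$ gives $H(0)_u = H(0)_0$. In the bottom degree $i = 0$, nonnegativity of all summands gives $h_0 = \sum_{a=0}^{u} H(a)_0 \ge H(0)_0$. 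Combining the two, $h_u = H(0)_0 \le h_0$, contradicting $h_u > h_0$; hence no such $A$ and $I$ can exist.

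I expect the argument to be essentially complete once the decomposition is invoked; the only points requiring care are the two pieces of bookkeeping, namely translating ``symmetric about $(u-a)/2$'' into the support interval $[0,\,u-a]$ and confirming that $a=0$ is the unique index contributing in the top degree. There is no serious obstacle beyond verifying that the end degree of $I$ is correctly identified as the largest index $u$ with $h_u \ne 0$, which is forced by the given form of the sequence $H$.
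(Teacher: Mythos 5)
Your proposal is correct and follows essentially the same route as the paper: both reduce to $h_u = H(0)_u = H(0)_0 \le h_0$ via the symmetric decomposition. The only cosmetic difference is that you deduce $H(a)_u = 0$ for $a>0$ from symmetry together with vanishing in negative degrees, whereas the paper reads it off directly from the explicit formula for $Q(a)_v$ in Equation \eqref{EQ:Q}; both justifications are valid.
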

\begin{proof}
    Suppose $H$ is an $I$-Gorenstein sequence. Notice that by Equation \eqref{EQ:Q}, $Q(a)_u=0$ for all $a>0.$ Hence by Theorem \ref{Thm:SymmetricDecomposition}, $h_u = H(0)_u.$  By symmetry of $H(0)$, we get $h_u= H(0)_u = H(0)_0 $. Since $H(0)_0 \leq h_0$, we get $h_u \leq h_0$. This is a contradiction. 
\end{proof}

Next, we give an example to illustrate Theorem \ref{Thm:SymmetricDecomposition}.
\begin{example}\label{Example:1}
    Let $A = \frac{k[\![ t^4, t^5, t^6]\!] }{(t^{10})}$. We know that $R= k[\![ t^4, t^5, t^6]\!]$ is Gorenstein, since the numerical semigroup ring generated by 4,5,6 is symmetric (see \cite[Theorem 4.4.8]{BH98}). Further, $t^{10}$ is a nonzerodivisor in $R$. Hence $A$ is an AG $k$-algebra. Let $I = (t^6)$ be an ideal in $  A$. 
    Then $I^2 = (t^{12})$, and $I^3 =0$. Hence the end degree of $I$ is 2.  
     \par Note that $\ell(A) = 10, \, \ell(A/I) = 6, \, \ell(A/I^2)  = 8$. Hence the Hilbert function of $G(I)$ is (6,2,2).
    Now we compute $C(a)$ to obtain $Q(a)$.
    We have 
\begin{equation*}
  \begin{split}
    (0:I) & = (t^4, t^8, t^9, t^{12}, t^{13}, t^{17}), \\
    (0:I^2) & = (t^4, t^6, t^8, t^9,  t^{11}, t^{12}, t^{13}, t^{17}).
  \end{split}
\end{equation*}
\begin{center}
\begin{tabular}{|c|c|c|}
  \hline
      $\cap$     & $I$ & $I^2$ \\
        \hline
  $(0:I)$ &   $( t^{12}, t^{17})$ & $I^2$ \\
  \hline
  $(0:I^2)$ &  $I$ & $I^2$\\
  \hline
\end{tabular}
\end{center}
Therefore 
\begin{equation*}
  \begin{split}
    C(1)_0 & = \frac{(0:I^2) \cap A}{(0:I^2) \cap I} = \frac{(0:I^2)}{I} \\
    C(1)_1 & = \frac{(0:I) \cap I}{(0:I) \cap I^2} = \frac{( t^{12}, t^{17})}{I^2}=0 \\
    C(1)_2 & = 0 \\
    C(2)_0 & = \frac{(0:I) \cap A}{(0:I)\cap I} =\frac{(0:I) +I}{ I} = \dfrac{(t^4,t^6,t^{8},t^9, t^{11}, t^{12}, t^{13}, t^{17})}{I} =  \frac{(0:I^2)}{I}\\
    C(2)_1 & = C(3)_0 = 0.
  \end{split}
\end{equation*}
Hence \begin{equation*}
  \begin{split}
    Q(0)_0 & = C(0)_0/C(1)_0 \cong A/(0:I^2) \\
    Q(0)_1 & = C(0)_1/C(1)_1 \cong I/I^2 \\
    Q(0)_2 & = C(0)_2/C(1)_2 \cong I^2 \\
    Q(1)_0 & =C(1)_0/C(2)_0 = 0 \\
    Q(1)_1 & = Q(1)_2 = 0 \\
    Q(2)_0 & = C(2)_0/C(3)_0 \cong \dfrac{(0:I^2)}{I} \\
    Q(2)_1 & =Q(2)_2= 0.
  \end{split}
\end{equation*}

Therefore we have a decomposition of the Hilbert function given below:
\begin{center}
  \begin{tabular}{ c|c c c}
    
        \hline
     {\bf HF}  & {\bf 6 } & {\bf 2} & {\bf 2} \\
   \hline
    H(0) & 2 & 2& 2 \\

    H(1) & 0 & 0 & 0 \\
  
    H(2) & 4 & 0 & 0 \\
    \hline
  \end{tabular}
\end{center}
The Hilbert series computation of $G(I)$ can also be verified in COCOA as follows:
\begin{verbatim}
    Use S::=QQ[x,y,z];
    J:=Ideal(y^2-x*z,y^2,x*z,x^3-z^2);
    I:=Ideal(z)+J;
    PS:= PrimaryHilbertSeries(J,I); PS;
\end{verbatim} \qed
\end{example}  
Next, we show that, as in the case of symmetric decomposition of $G(\m)$ (see [Theorem 1.5]\cite{I94Memoir}), $Q(0)$ is a Gorenstein quotient of $G(I)$ of socle degree $u$. 

\begin{proposition}\label{Prop:Q(0)Gorenstein}
    Let $A=R/J$ be an AG $k$-algebra, and $I$ an ideal in $A$ with the end degree $u$. Then $G(I)/C(1) \cong Q(0)$ is a graded Gorenstein quotient of $G(I)$ of socle degree $u$.  
\end{proposition}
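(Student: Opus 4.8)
The plan is to exhibit $Q(0)$ as a commutative Frobenius, hence Gorenstein, $k$-algebra by equipping it with a perfect symmetric associative bilinear form built from the functional $\varphi$ of Lemma \ref{Lem:nonDegenerate}; this mirrors the graded Gorenstein-ness of $Q(0)$ in Iarrobino's decomposition of $G(\m)$. First I would record the structural facts. Since $I^{u+1}=0$ we have $I^i\subseteq(0:I^{u+1-i})$, so $C(0)_i=I^i/I^{i+1}=G(I)_i$ and $C(0)=G(I)$; thus $Q(0)=G(I)/C(1)$ is a graded quotient ring of $G(I)$, finite dimensional over $k$ and concentrated in degrees $0,\dots,u$. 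Specializing Equation \eqref{EQ:Q} to $a=0$, $v=u$ (so $l=1$) gives $Q(0)_u\cong I^u\cap(0:I)=I^u\neq 0$, while $Q(0)_v=0$ for $v>u$; hence the top nonzero graded piece is $Q(0)_u$, which will give socle degree $u$ in the sense used in this paper.

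Next I would define the form on the ambient ring $G(I)$. For homogeneous $\bar a\in G(I)_v$ and $\bar b\in G(I)_w$ with lifts $a\in I^v$, $b\in I^w$, set
\[
B(\bar a,\bar b)=
\begin{cases}
\varphi(ab) & \text{if } v+w=u,\\
0 & \text{otherwise.}
\end{cases}
\]
When $v+w=u$ one has $ab\in I^u$, and replacing $a$ by $a+a'$ with $a'\in I^{v+1}$ changes $\varphi(ab)$ by $\varphi(a'b)\in\varphi(I^{u+1})=0$, so $B$ is well defined; it is symmetric because $A$ is commutative, and it satisfies $B(xy,z)=B(x,yz)$ for all $x,y,z$, both sides being equal to $\varphi(abc)$ when the three degrees sum to $u$ and vanishing otherwise.

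The crucial step is to identify the radical of $B$ with $C(1)$. A homogeneous $\bar a\in G(I)_v$ lies in $\rad B$ if and only if $\varphi(ab)=0$ for all $b\in I^{u-v}$, i.e.\ if and only if $a\in\ann_A(I^{u-v})=(0:I^{u-v})$ by Lemma \ref{Lemma:colonPerpLemma}; intersecting with $I^v$ and passing to $G(I)_v=I^v/I^{v+1}$, this says precisely that $\bar a\in\big((0:I^{u-v})\cap I^v+I^{v+1}\big)/I^{v+1}=C(1)_v$, by \eqref{EQ:C(a)i}. Since $B$ is associative its radical is an ideal (for $x\in\rad B$ one has $B(rx,z)=B(x,rz)=0$), so $\rad B=C(1)$, and $B$ descends to a non-degenerate symmetric associative form on $Q(0)=G(I)/C(1)$ pairing $Q(0)_v$ with $Q(0)_{u-v}$; this non-degeneracy is the duality already recorded abstractly in Proposition \ref{Prop:technical} for $a=0$.

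Finally I would conclude. The ring $Q(0)$ is Artinian local with residue field $k$, and the perfect associative symmetric form $B$ makes $x\mapsto B(x,-)$ an injective $Q(0)$-linear map $Q(0)\to\Hom(Q(0),k)$ into its Matlis dual (the canonical module of $Q(0)$); as $\dim_k Q(0)=\dim_k\Hom(Q(0),k)$, this map is an isomorphism of $Q(0)$-modules, whence $Q(0)$ is Gorenstein. Together with $Q(0)_u\neq 0=Q(0)_{>u}$ this gives socle degree $u$, as claimed. The main obstacle is exactly the third step, the identification $\rad B=C(1)$: one must check not merely that $C(1)$ is contained in the radical (well-definedness of $B$ on the quotient) but that the radical is no larger, and this is where the explicit description \eqref{EQ:C(a)i} of $C(1)$ together with the identity $\ann_A(I^{u-v})=(0:I^{u-v})$ from Lemma \ref{Lemma:colonPerpLemma} do the real work.
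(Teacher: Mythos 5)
Your proof is correct, but it takes a genuinely different route from the paper's. The paper argues directly on the socle: it shows that no homogeneous element of $Q(0)_i$ with $i<u$ can be killed by the maximal ideal $\mathfrak{M}$ of $Q(0)$ (via the colon-ideal computation $a\cdot I\subseteq(0:I^{u-i-1})\Rightarrow a\in(0:I^{u-i})$, i.e.\ $\bar a=0$ in $Q(0)_i$), and then identifies $(0:\mathfrak{M})_u$ with $(0:\m)\cap I^u$, which is one-dimensional because the AG algebra $A$ is an essential extension of its socle. You instead build a Frobenius structure: the degree-pairing $B(\bar a,\bar b)=\varphi(ab)$ on $G(I)$, the identification $\rad B=C(1)$ via Lemma \ref{Lemma:colonPerpLemma} and the explicit description \eqref{EQ:C(a)i}, and then self-duality $Q(0)\cong\Hom_k(Q(0),k)$ as $Q(0)$-modules. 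Your key step checks out in both directions: for a lift $a\in I^v$ of $\bar a$, the condition $\varphi(aI^{u-v})=0$ is equivalent to $a\in(0:I^{u-v})$, it is independent of the lift because $I^{v+1}I^{u-v}=I^{u+1}=0$, and as a condition on the class it is exactly membership in $\bigl((0:I^{u-v})\cap I^v+I^{v+1}\bigr)/I^{v+1}=C(1)_v$. What your approach buys is a conceptual packaging that simultaneously recovers the $a=0$ case of the duality $Q(0)_v^{\widecheck{}}\cong Q(0)_{u-v}$ of Proposition \ref{Prop:technical} and avoids invoking the essential-extension property of $A$; what the paper's approach buys is a shorter, more elementary argument that pins down the socle explicitly as $(0:\m)\cap I^u$ inside $A$, a description it does not need the full dual-module formalism to obtain. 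One point worth making explicit if you write this up: once Gorensteinness is established, the socle is one-dimensional and must lie in degree $u$ because $Q(0)_u\neq 0$ is a nonzero module over the Artinian local ring $Q(0)_0$ annihilated by $Q(0)_{\geq 1}$, hence contains a nonzero element killed by all of $\mathfrak{M}$; your closing sentence compresses this slightly.
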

\begin{proof}
    First we prove that $Q(0)$ is a Gorenstein $k$-algebra.
    Consider $\mathfrak{M} = \frac{\m}{(0:I^u)} +Q(0)_{i\geq 1}$, the maximal ideal of $Q(0)$. We prove $(0: \mathfrak{M})$ is one-dimensional. Let $i <u$ and $ \bar{a} \in Q(0)_i$ be such that $\bar{a} \mathfrak{M}=0. $ Then $\bar{a}\cdot Q(0)_1 = 0$. Hence by Equation \eqref{EQ:QDefinition}, we obtain 
    $$a \cdot I \subseteq (0:I^{u-{i-1}}) \cap I^{i+1} + I^{i+2} \subseteq  (0:I^{u-{i-1}}). $$
    Hence $$a \in (0:I^{u-{i-1}}) :I = (0:I^{u-{i}}).$$
    This gives $\bar{a} = 0$. Therefore $(0: \mathfrak{M})$ has no element of degree $< u.$ 
  
    \noindent Now suppose $ \bar{x} \in Q(0)_u$ is such that $\bar{x} \in (0:\mathfrak{M}).$  Then $\bar{x} \in \left(0: \frac{\m}{(0:I^u)}\right)$. Again, by using Equation \eqref{EQ:QDefinition}, we get
    $$x \m \subseteq (0: I^{0}) \cap I^u = 0.$$  Therefore $x \in (0:\m)$. Hence we obtain $(0: \mathfrak{M})_u = (0:\m) \cap I^u$. Since $A$ is a Gorenstein local ring, and an essential extension of $(0:\m)$, we obtain that $(0: \mathfrak{M})_u = (0:\m) \cap I^u$ is a one-dimensional $k$-vector space. Hence $Q(0)$ is Gorenstein of socle degree $u$.
\end{proof}  



Next, we provide equivalent criteria for $G(I)$ to be Gorenstein. Note that the equivalence of $(i)$ and $(v)$ in the following result was obtained in \cite[Theorem 4.2]{HKU11}.  
Here we provide an alternative proof of the same with some more equivalent conditions in terms of the vanishing of $Q(a)$ for $a>0.$ 
\begin{theorem}
\label{thm:HKUresult}
    Let $A =R/I$ be an AG  $k$-algebra, and $I$ an ideal in $A$ with the end degree $u$. Then the following are equivalent:
    \begin{enumerate}[(i)]
        \item $G(I)$ is Gorenstein of socle degree $u$;
        \item $C(1) =0$;
        \item $C(i)=0$ for all $i\geq 1$;
        \item $Q(i)=0$ for all $i \geq 1$;
        \item $\HF({G(I)})$ is symmetric.
    \end{enumerate}
\end{theorem}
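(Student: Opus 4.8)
The plan is to prove the five statements equivalent by closing the cycle
(i) $\Rightarrow$ (v) $\Rightarrow$ (iv) $\Rightarrow$ (iii) $\Rightarrow$ (ii) $\Rightarrow$ (i),
with all the genuinely new content concentrated in the implication (v) $\Rightarrow$ (iv). Three of the links are essentially formal. The implication (iii) $\Rightarrow$ (ii) is immediate, while (ii) $\Rightarrow$ (iii) holds because $C(1) \supseteq C(i)$ for every $i \geq 1$. The equivalence (iii) $\Leftrightarrow$ (iv) is just the observation that $Q(a) = C(a)/C(a+1)$ together with the boundary condition $C(u+1) = 0$: vanishing of all $Q(i)$ with $i \geq 1$ forces $C(i) = C(i+1)$, and this propagates downward from $C(u+1) = 0$ to give $C(i) = 0$ for all $i \geq 1$. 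For (ii) $\Rightarrow$ (i), I would note that $C(1) = 0$ forces $Q(0) = C(0)/C(1) = G(I)$, and then invoke Proposition \ref{Prop:Q(0)Gorenstein}, which already tells us that $Q(0)$ is Gorenstein of socle degree $u$.

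For (i) $\Rightarrow$ (v), the point is that a graded Artinian Gorenstein algebra of top degree $u$ has symmetric Hilbert function, which I would obtain from graded Matlis duality. Since $A$ is local, $G(I)_0 = A/I$ is Artinian local with residue field $k$, so $G(I)$ is $\ast$-local; being Gorenstein it is self-injective, and hence its graded $k$-dual $\Hom(G(I), k)$ is isomorphic to $G(I)$ shifted by the socle degree $u$. Comparing graded pieces gives $\dim_k G(I)_i = \dim_k G(I)_{u-i}$, i.e. $\HF(G(I))$ is symmetric. The only difference from the classical graded case is that $G(I)_0$ is not a field, but the duality argument is insensitive to this since all lengths are taken over $k$.

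The heart of the argument is (v) $\Rightarrow$ (iv). Write $h_i = \HF(G(I))_i = \sum_{a=0}^u H(a)_i$ as in Theorem \ref{Thm:SymmetricDecomposition}, where each $H(a)$ is nonnegative, supported in degrees $[0, u-a]$, and symmetric about $(u-a)/2$; in particular $H(0)$ is symmetric about $u/2$. Set $g_i := h_i - H(0)_i = \sum_{a=1}^u H(a)_i \geq 0$. Since both $h$ and $H(0)$ are symmetric about $u/2$, so is $g$, i.e. $g_i = g_{u-i}$. I would then run a two-sided peeling induction on $m \geq 0$ showing that $H(a)_m = 0$ for every $a \geq 1$. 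At stage $m$ the inductive hypothesis states that for all $a \geq 1$ the function $H(a)$ is supported in $[m, u-a-m]$; inspecting $g_{u-m}$ and using this support shows that no term $H(a)_{u-m}$ with $a \geq 1$ survives (its upper support bound $u-a-m$ is strictly below $u-m$), so $g_{u-m} = 0$, whence $g_m = 0$ by symmetry, and finally $H(a)_m = 0$ for all $a \geq 1$ because the summands are nonnegative. The symmetry of each $H(a)$ about $(u-a)/2$ then also forces $H(a)_{u-a-m} = 0$, trimming the support to $[m+1, u-a-m-1]$ and feeding the next stage. Running the induction to exhaustion yields $H(a) \equiv 0$, i.e. $Q(a) = 0$, for all $a \geq 1$. (The reverse, (iv) $\Rightarrow$ (v), is immediate: if $Q(a) = 0$ for $a \geq 1$ then $h = H(0)$, which is symmetric about $u/2$.)

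The main obstacle I anticipate is getting the bookkeeping of this peeling induction exactly right: one must interleave the global symmetry of $g$ about $u/2$ with the individual symmetries of the $H(a)$ about the smaller centers $(u-a)/2$, and verify that the support windows $[m, u-a-m]$ shrink consistently from both ends so that every degree is eventually reached. Once this induction is set up carefully, the remaining implications are routine. I would also double-check the boundary conventions, namely that $Q(a)_v = 0$ for $v > u-a$, which follows from the fact that $(0 : I^l) = 0$ when $l \leq 0$ in Equation \eqref{EQ:Q}, so that the claimed supports are correct.
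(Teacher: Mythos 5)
Your proof is correct, and most of it coincides with the paper's: the links (ii) $\Leftrightarrow$ (iii) $\Leftrightarrow$ (iv) are handled identically, (ii) $\Rightarrow$ (i) is the same appeal to Proposition \ref{Prop:Q(0)Gorenstein}, and your ``peeling'' argument for (v) $\Rightarrow$ (iv) is, in substance, the paper's argument run in the other direction: the paper takes $i$ maximal with $Q(a)_i\neq 0$ for some $a>0$, uses the symmetry of $\HF(G(I))$ and of $H(0)$ to transfer the vanishing $h_j=H(0)_j$ ($j>i$) down to degrees $k<u-i$, and then uses the symmetry of $H(a)$ about $(u-a)/2$ to contradict $H(a)_i\neq 0$; your two-sided induction on the support windows $[m,u-a-m]$ is the same interplay of the global and individual symmetries, just organized as a descent rather than a minimal-counterexample argument, and your bookkeeping does close up (every degree lies in some window endpoint, and the boundary fact $Q(a)_v=0$ for $v>u-a$ does follow from $(0:I^l)=0$ for $l\le 0$ in \eqref{EQ:Q}). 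The one genuinely different link is how you get from (i) into the cycle: the paper proves (i) $\Rightarrow$ (ii) directly, showing $(0:I^{u-i})=I^{i+1}$ for all $i$ by an essential-extension argument with initial forms (if $\alpha\in(0:I^{u-i})\setminus I^{i+1}$, then $(\alpha^*)$ meets the one-dimensional socle of $G(I)$ in degree $u$, producing $\alpha^*x^*\neq 0$ while $\alpha x=0$), whereas you prove (i) $\Rightarrow$ (v) by graded Matlis duality. Your route is shorter and avoids the initial-form computation, but it leans on graded canonical-module theory over the Artinian local base $G(I)_0=A/I$, which is not a field: one must identify $\Hom_k(G(I),k)$ with $\Hom_{A/I}(G(I),E_{A/I}(k))$ via $E_{A/I}(k)\cong\Hom_k(A/I,k)$ and adjunction, and use that type one plus faithfulness forces this dual to be a shift of $G(I)$; you acknowledge the issue only in passing, so if you write this up you should spell that identification out. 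The paper's direct (i) $\Rightarrow$ (ii) also yields the extra structural information $(0:I^{u-i})=I^{i+1}$, which your duality argument does not produce.
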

\begin{proof} $(i) \iff (ii):$ Suppose $G(I)$ is Gorenstein of socle degree $u$. 
By definition \ref{EQ:C(a)i} we have
\[C(1)_i = \left( (0\colon I^{u-i}) \cap I^i +I^{i+1}\right) / I^{i+1}.\]
We claim that $(0\colon I^{u-i}) = I^{i+1}$  for all $0 \leq i \leq u.$ Hence $C(1)_i = 0$. Indeed,  
$I^{i+1} \subseteq (0\colon I^{u-i})$. Conversely, suppose there exists $\alpha \in (0\colon I^{u-i}) \setminus  I^{i+1}.$ Choose $j < i+1$ such that $\alpha \in I^{j}\setminus I^{j+1}.$ Consider the ideal $ J=( \alpha^*)$ in $G(I)$ where $\alpha^*$ denotes  the initial form of $\alpha$ in $G(I).$ Since $G(I)$ is an essential extension of $\Soc G(I)$ and  $G(I)$ is Gorenstein of socle degree $u,$, we get  $\left[\Soc G(I)\right]_u \cap J_u \neq 0.$ Therefore  $0\neq  \alpha^* x^* \in \Soc G(I) \cap J$ for some $x^* \in G(I)_{u-j}$. 
Since $x \in I^{u-j} \subseteq I^{u-i}$ and $\alpha \in (0\colon I^{u-i})$, we get $\alpha x = 0$ which is a contradiction. Therefore we have proved the claim.  

Conversely, $C(1)=0$ implies that $G(I)= Q(0).$ Hence $G(I)$ is Gorenstein of socle degree $u$ by Proposition \ref{Prop:Q(0)Gorenstein}.
\vskip 2mm

\noindent $(ii)\iff (iii):$ By definition $C(1) \supseteq C(i)$, for all $i \geq 2$. Hence if $C(1) = 0$, then $C(i)=0$ for all $i \geq 2.$ The converse is clear.
\vskip 2mm
\noindent $(iii) \iff (iv):$ By definition, $Q(i) = C(i)/C(i+1)$. Hence $Q(i)=0$ for all $i \geq 1$ if $C(i)=0$ for all $i \geq 1$. 
Conversely, suppose $Q(i)=0$, for all $i \geq 1$. Then
\[
C(1) =C(2) = \cdots = C(u) = C(u+1) = 0.
\]

\noindent $(iv) \iff (v):$ Suppose $Q(i)=0$ for all $i \geq 1.$ Since $\HF({G(I)}) = \sum_{i = 0}^u \HF({Q(i)})$ by Theorem \ref{Thm:SymmetricDecomposition}, $\HF({G(I)}) = \HF({Q(0)})$. By Theorem \ref{Thm:SymmetricDecomposition}, $\HF({Q(0)})$ is symmetric, and so does the Hilbert function of $G(I).$

 Conversely, suppose that the Hilbert function of $G(I)$ is symmetric. Note that the Hilbert function of $Q(0)$ is also symmetric. Further, $\HF({G(I)}) = \sum_{i= 0}^u \HF(a)$ by Theorem \ref{Thm:SymmetricDecomposition}. We claim that $Q(a)=0$ for all $a>0.$ Suppose $Q(a) \neq 0$ for some $a>0.$  
  Let 
 \[
 i:=\max\{j:Q(a)_j \neq 0 \mbox{ for some } a>0\}.
 \]
 Then $Q(a)_j=0$ for all $j>i$ and $a >0.$
  Hence $\HF(G(I))_{u-k} = H(0)_{u-k}$ for all $u-k>i.$ Since the Hilbert functions of $Q(0)$ and $G(I)$ are symmetric,
  \[
  \HF(G(I))_k=\HF(G(I))_{u-k}=H(0)_{u-k}=H(0)_k \mbox{ for all } k<u-i.
  \]
 In particular, for $k = u-a-i < u-i$ we obtain 
 $H(a)_{u-a-i} = 0$ for all $a>0.$ Hence by Proposition \ref{Prop:technical}, 
$H(a)_{i}= H(a)_{u-a-i} = 0$ for all $a>0,$ which is a contradiction.

\end{proof}

The above result helps us to conclude the Gorensteinness of the associated graded ring by computing the Hilbert function alone. 

 
\section{Some open problems in codimension two}

In this section we assume that $A=\dfrac{k[\![ x,y ]\!]}{J}$ is an AG $k$-algebra. In the codimension two case, Iarrobino proved that the symmetric decomposition of the Hilbert function of $G(\m)$ is unique.
We show that for any ideals in $A$ such a result is not necessarily true.

Using Theorem \ref{Thm:BN96} we have written the code ``isHFA" which is useful to understand the $b$-admissible sequences of standard graded rings over an Artinian ring.
This code is written using the similar ideas of Macaulay2 function ``isHF" used to verify admissible Hilbert functions of standard graded rings over the field of rationals. But unlike in the code ``isHF", in the code ``isHFA" we need to provide an integer $b$ and a numerical sequence $h$ in the input.  The output is ``true" if $h$ is the Hilbert function of $\frac{R_0[x_1,\ldots,x_b]}{I}$ for some ideal $I$ in $R_0[x_1,\ldots,x_b]$ where $R_0$ is Artinian, else is ``false".  We use this code to verify that a numerical sequence is $b$-admissible in this paper.
The code is as follows:
\begin{verbatim}
    loadPackage "LexIdeals"
    isHFA = method(TypicalValue=>Boolean)
    isHFA(ZZ, List) := (b,hilb) ->
            ( result:=true;
            if not all(hilb,i->instance(i,ZZ)) then result=false
            else( if hilb#1 > b*hilb#0 then result=false
            else (
            leng:=#hilb;
            degr:=1; while result==true and degr < leng-1
            do ((q,r) := 
                quotientRemainder(hilb#(degr), binomial(degr+b-1, b-1)); 
            if hilb#(degr+1) > q*(binomial(degr+b, b-1))+macaulayBound(r,degr) 
                then result=false else degr=degr+1; );
                );
                    );
            result
            )

--Example:
i1: isHFA(2, {2,2,3,4})
o1: true

\end{verbatim}

Let $A=\dfrac{k[\![ x,y ]\!]}{J}$ be an AG $k$-algebra with the maximal ideal $\m$. Inspired by the results from the symmetric decomposition of the Hilbert function of $G(\m)$ in \cite{I89} we ask:
\begin{question}
\begin{enumerate}[i.]\label{Question}
    \item Which $b$-admissible sequences are $I$-Gorenstein sequences for an ideal $I$ in $A?$
    \item Does every $I$-Gorenstein sequence have a unique symmetric decomposition? 
\end{enumerate}
\end{question}

We remark that a characterization of Gorenstein sequence of codimension two is classically known due to F.S. Macaulay \cite{Macaulay1904} \footnote{In \cite{Macaulay1904} Macaulay concluded a proof using his inverse systems and some ideas of C. A. Scott in \cite{Scott1902}. The characterization of local Gorenstein sequences was also obtained by Brian\c con \cite{Br77}, and later Iarrobino in \cite[Chapter 2]{I94Memoir} presented a
self-contained proof using symmetric decomposition.}. Namely, an admissible sequences of the form 
\[H(A) = (1,2, \ldots, d, h_d, h_{d+1}, \ldots, h_s=1),\]
where $d \geq h_d \geq h_{d+1} \geq h_s =1,$ is a Gorenstein sequence if and only if $\mid h_i-h_{i+1}\mid \leq 1$ for all $i.$
\vskip 2mm

Inspired by this result, we guessed that if a $b$-admissible sequence $(h_0,\ldots,h_s)$ is $I$-Gorenstein for an ideal $I$ in $A,$ then $\mid h_i-h_{i+1} \mid \leq h_0.$ The following example shows that this need not to be true.

\begin{example} 
    Consider a $3$-admissible sequence $h=(3,7,7,3).$ Then $h$ is an $I$-Gorenstein sequence. Indeed,  
    let $A= \frac{k [\![ x,y ]\!]}{(x^4+y^4, x^3y^2)}$ and  $I = (x^2, y^2, xy) \subset A$ be an ideal in $A$. Then $A$ is an AG $k$-algebra, and $G(I)$ has the Hilbert function $h.$ 
    
    Note that here $\mid h_1-h_0 \mid=4>3,$ and $h$ is not a 2-admissible sequence. 
    
\end{example}

 We conjecture the following for a 2-admissible sequence to be $I$-Gorenstein. 
We say that a sequence $H=(h_0, h_1 , \ldots, h_u)$ is {\it partially strict unimodal} if $h_0<h_1< \cdots < h_i \geq h_{i+1} \geq \cdots \geq h_u  $ for some $0 \leq i \leq u.$

\begin{conjecture}
\label{Conj:2admissible-IGoresteinsSeq}
      Let $A = k [\![ x,y ]\!] /J$ be an AG  $k$-algebra. Let $H=(h_0, h_1 , \ldots, h_u)$  be a $2$-admissible sequence 
      such that\\
      (i) $H$ is partially strict unimodal; and\\
      (ii)  $\mid h_i - h_{i+1} \mid \leq h_0$ for all $i$.\\
  Then $H$ is an $I$-Gorenstein sequence. 
\end{conjecture}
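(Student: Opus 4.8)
The statement is a realizability claim, so the natural line of attack is an explicit construction: for each sequence $H=(h_0,\dots,h_u)$ satisfying (i) and (ii), I would produce a complete intersection $A=k[\![x,y]\!]/(f,g)$ together with an ideal $I$ for which $\HF(G(I))=H$. The first simplification is structural. Since $A$ is a codimension-two AG $k$-algebra, its defining ideal $J$ is a grade-two Gorenstein ideal, hence a complete intersection, so $J=(f,g)$ for a regular sequence $f,g$; dually, by the correspondence of Proposition \ref{Prop:Lmapcorr} the algebra $A$ is determined by a single linear functional, i.e. by a dual (inverse system) generator, and varying it lets one prescribe the Hilbert function of $A$ among all codimension-two Gorenstein sequences (Macaulay's characterization, \cite{Macaulay1904}). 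Because $G(I)$ and $A$ have the same length, $\ell(A)=\sum_{j=0}^u h_j$ is forced, so the target length of $A$ is pinned down from the start, and I would first fix $f,g$ (equivalently the dual generator) realizing it.

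Theorem \ref{Thm:SymmetricDecomposition} then serves as both a blueprint and a bookkeeping device. The plan is to write down the candidate decomposition $H=\sum_{a=0}^u H(a)$ forced by the hypotheses: the component $H(0)$ must be the symmetric Hilbert function of a codimension-two Gorenstein algebra of socle degree $u$ (Proposition \ref{Prop:Q(0)Gorenstein}), while the strictly increasing head $h_0<h_1<\cdots<h_i$ forces the excess over this symmetric core to be carried by the low-degree entries of the components $H(a)$ with $a>0$. Here condition (ii), namely $|h_j-h_{j+1}|\le h_0$, is exactly the bound on how much mass can be peeled off at each step so that every $H(a)$ remains an admissible symmetric sequence; Macaulay's codimension-two criterion is invoked to certify that each component is in turn realizable. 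Showing that (i) and (ii) guarantee the existence of a consistent decomposition of this shape is the combinatorial core of the proof.

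With the blueprint fixed, the last step is to realize it. I would take $I$ generated by two elements, so that $G(I)$ is a standard graded quotient of $(A/I)[x_1,x_2]$ and the resulting sequence is automatically $2$-admissible in the sense of Theorem \ref{Thm:BN96}, and then tune the two generators so that the colengths $\ell(A/I^{j+1})-\ell(A/I^j)$ equal $h_j$ for every $j$. Finally one checks, using the explicit description \eqref{EQ:Q} of $Q(a)_v$ together with the symmetry $Q(a)_v\cong Q(a)_{u-a-v}$ of Proposition \ref{Prop:technical}, that the symmetric decomposition induced by this pair $(A,I)$ coincides with the blueprint.

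The main obstacle I anticipate is the uniformity of this last step: controlling the intersections $(0:I^{l})\cap I^{v}$ simultaneously in all degrees and across the entire family of sequences satisfying (i) and (ii), rather than case by case as in Section 5. The most delicate point is compatibility with Corollary \ref{Cor:HuAtmostH0}: since $Q(a)_u=0$ for every $a>0$, one necessarily has $h_u=H(0)_u=H(0)_0\le h_0$, so the decreasing tail of $H$ must close up consistently with the symmetric core $H(0)$. Verifying that hypotheses (i) and (ii) are strong enough to force this closure, and hence to admit a single construction valid for all admissible $H$, is where I expect the real difficulty to lie.
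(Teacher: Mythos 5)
There is a fundamental mismatch here: the statement you are trying to prove is stated in the paper as a \emph{conjecture}, and the paper offers no proof of it. The only evidence the authors provide is the case-by-case verification in Section 5 for sequences with $h_0=2$ and end degree at most $3$, where each realizable sequence is certified by an explicit dual generator $F$ and the ideal $I=(x,y^2)$ (Table \ref{table:IGorenstein}), and the non-realizable ones are excluded by ad hoc arguments. Your proposal is a strategy outline, not a proof: the two steps you yourself identify as the ``combinatorial core'' (that hypotheses (i) and (ii) force a consistent numerical symmetric decomposition) and the ``uniformity'' of the realization step are precisely the open content of the conjecture, and neither is carried out.

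Beyond the fact that the argument is incomplete, there is a conceptual gap in the blueprint itself. Your plan is: produce a numerical decomposition $H=\sum_a H(a)$ with each $H(a)$ symmetric, then realize it. But the paper's own Section 5 shows that admitting a numerical symmetric decomposition is far from sufficient for realizability: the sequence $(2,2,3,1)$ has a perfectly consistent symmetric decomposition, yet the authors rule it out by a delicate argument exploiting the ring structure ($(0:I^2)=I^2$ forces $xy,y^3\in I^2$, which collapses the length of $k[\![x,y]\!]/J^*$). Similarly $(2,4,1)$ and $(2,4,4,1)$ admit symmetric decompositions but fail for structural reasons. So any proof of the conjecture must show not merely that a decomposition exists numerically, but that the specific modules $Q(a)$ built from the filtration $(0:I^{u+1-a-i})\cap I^i$ can be made to realize it --- and the paper gives no general mechanism for this, only one worked family of examples. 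Your proposal does not close this gap; it restates it.
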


A reason for assuming $H$ is partially strict unimodal is that if $H$ is an O-sequence of the form $(1,2,\ldots,h_s),$ then by Macaulay's condition $H$ is partially strict unimodal, which need not be true if $H$ is a $b$-admissible sequence. In the following we give an example of a $2$-admissible sequence, which shows that the assumption (i) in Conjecture \ref{Conj:2admissible-IGoresteinsSeq} is necessary.

\begin{example}
    Consider a 2-admissible sequence $H=(2,2,3,2)$. Then $H$ is not partially strict unimodal, but $\mid h_i - h_{i+1} \mid \leq h_0=2$ for all $i.$
    But $H$ is not an $I$-Gorenstein sequence because $H$ does not admit a symmetric decomposition. 
\end{example}

In Section 5, we investigate $2$-admissible sequences with $h_0 = 2$ and $u \leq 3$, which supports our conjecture.

\vskip 2mm

Now we investigate Question \ref{Question} (ii). 
In \cite{I89} Iarrobino showed that if $A$ is an 
AG $k$-algebra of codimension two, then the Hilbert function of $A$ determines the symmetric decomposition. Moreover, each $Q(a)$ is isomorphic, after a shift in grading, to a complete intersection $k[x,y]/(f_a,g_a)$ \cite[Theorem 2.2]{I94Memoir}. The example below shows that a $2$-admissible $I$-Gorenstein sequence need not determine the symmetric decomposition. 

 \begin{example}
 \label{Example:2SymmDec}
     {\bf $I$-Gorenstein sequence with two different symmetric decompositions}: 
     Consider a $2$-admissible sequence $H=(4,4,1).$ 
     Let $A=Q[\![x,y]\!]/(x^2y^2-x^3, y^3)$, and $I_1=(x^2,y^2)$ an ideal in $A$. Then the Hilbert function of $G(I_1)$ is $H$. The symmetric decomposition of the Hilbert function of $G(I_1)$ is given in Figure 1.
     Further, consider the ring $B = Q[\![x,y]\!]/(y^4-x^5, xy),$ and the ideal $I_2 = (x^3,y^2)$ in $B$. Then the Hilbert function of $G(I_2)$ is also $H$. The Hilbert function of $G(I_2)$ has the symmetric decomposition shown in Figure 2.
     
 \end{example}

\begin{center}
\begin{minipage}{0.45\textwidth}
\centering
{\setlength{\tabcolsep}{1pt}
\begin{tabular}{c | c c c c c }
        \hline
        $\HF(G(I_1))$ \, &&& 4, &4, &1 \\
        \hline
      H(0) \,&&& 1, &2, &1  \\
      H(1) \,&&& 2, & 2& \\
      H(2) \, &&& 1
\end{tabular}\captionof{figure}{}}

\end{minipage}%
\hspace{-2em}
\begin{minipage}{0.45\textwidth}
\centering
{\setlength{\tabcolsep}{1pt}
\begin{tabular}{c | c c c c c }
        \hline
        $\HF(G(I_2))$ \, &&& 4, &4, &1 \\
        \hline
      H(0) \,&&& 1, &1, &1  \\
      H(1) \,&&& 3, & 3& \\
\end{tabular}\captionof{figure}{}}
\end{minipage}
\end{center}
 
We conjecture that a $2$-admissible $I$-Gorenstein sequence with $h_0=2$ determines the symmetric decomposition.

\begin{conjecture}
    Let $H$ be a $2$-admissible $G(I)$-Gorenstein sequence where $I$ is an ideal in an AG $k$-algebra $A=k[\![x,y]\!]/J$. Then the symmetric decomposition of $H$ is unique.
\end{conjecture}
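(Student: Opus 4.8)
The plan is to exploit the two hypotheses $\dim_k(A/I)=h_0=2$ and that $A$ has embedding dimension two to force every realization into a rigid normal form, and then to show that this rigidity propagates to the whole symmetric decomposition. First, since $A$ is an AG $k$-algebra of codimension two, it is classical that the defining ideal $J$ is a grade-two Gorenstein ideal and hence a complete intersection, so $A=k[\![x,y]\!]/(f,g)$ with $f,g$ a regular sequence in $\m^2$. Second, $\dim_k(A/I)=2$ forces $\m^2\subseteq I\subsetneq\m$ with $\dim_k(\m/I)=1$; thus $I/\m^2$ is a line in the two-dimensional space $\m/\m^2$, and after a linear change of coordinates $I=(x)+\m^2=(x,y^2)$, whose powers are the explicit ideals $I^n=(x^{n-j}y^{2j}:0\le j\le n)$. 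This normal form is exactly what is \emph{unavailable} in the general codimension-two setting of Example~\ref{Example:2SymmDec}, where $h_0$ is larger and $I$ carries many more degrees of freedom; it is the feature I would use to separate the two regimes.

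Next I would pin down the coarse shape of any admissible symmetric decomposition $\{H(a)\}_{a=0}^{u}$ of a given $H$. Because only $H(0)$ reaches degree $u$, the symmetry of $H(0)$ gives $H(0)_0=H(0)_u=h_u\ge 1$, and the degree-zero identity $\sum_{a=0}^{u}H(a)_0=h_0=2$ then leaves exactly two possibilities: either $h_u=2$, forcing $H(a)_0=0$ (hence, by symmetry, $H(a)_{u-a}=0$) for all $a\ge 1$; or $h_u=1$, with a single index $a^\ast\ge 1$ carrying $H(a^\ast)_0=1$. Proposition~\ref{Prop:Q(0)Gorenstein} identifies $Q(0)$ as a graded Gorenstein quotient of $G(I)$ of socle degree $u$ with $Q(0)_0$ of length $h_u$, so in the first case $C(1)_0=0$ while in the second $Q(0)_0\cong k$. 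The goal of the argument is to show that in each case the surviving interior freedom in $\{H(a)\}$ is eliminated.

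The structural heart is an analogue, for $G(I)$, of Iarrobino's theorem that in codimension two each component $Q(a)$ of the symmetric decomposition of $G(\m)$ is, up to a shift in grading, a complete intersection $k[x,y]/(f_a,g_a)$, so that its Hilbert function is determined by its socle degree alone and the whole decomposition of $\HF(G(\m))$ is rigid (see \cite{I89} and \cite[Theorem~2.2]{I94Memoir}). Using the complete intersection presentation $A=k[\![x,y]\!]/(f,g)$ and the explicit powers $I^n$, I would analyze the subquotients of Equation~\eqref{EQ:Q} and argue that each $Q(a)$ is again (a shift of) a module of complete-intersection type over the base $A/I$, whence $H(a)$ is pinned down by its first nonzero value together with its socle degree $u-a$. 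Imposing these rigid shapes against the degree-zero budget and the admissibility inequalities of Theorem~\ref{Thm:BN96} should then admit a \emph{unique} symmetric decomposition compatible with the prescribed $H$.

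The hard part, and the reason the statement is offered as a conjecture, is precisely this last rigidity. When $h_u=2$ the component $Q(0)$ is Gorenstein of socle degree $u$ over the length-two base $A/I\cong k[\epsilon]/(\epsilon^2)$, which is \emph{not} a field, so the classical codimension-two structure theorem for graded Gorenstein algebras over a field cannot be invoked directly and must be replaced by a relative version over this Artinian base; establishing that relative rigidity is, I expect, the crux. A workable route is induction on the end degree $u$: peel off the extremal degrees using the symmetry of each $H(a)$ and the self-duality of Proposition~\ref{Prop:technical}, reduce to a smaller end degree, and in each of the two cases $h_u\in\{1,2\}$ verify directly that no second admissible symmetric decomposition of $H$ survives.
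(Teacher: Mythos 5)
This statement is a \emph{conjecture} in the paper: the authors offer no proof, only supporting evidence (the exhaustive list of $h_0=2$ sequences of end degree at most $3$ in Table~\ref{table:IGorenstein}, each exhibiting a single symmetric decomposition, together with the partial constraint of Proposition~\ref{Prop:2andLast11}). Your proposal does not close that gap. The normalization steps you carry out are fine and match tools the paper actually has ($J$ a complete intersection in codimension two; $I=(x,y^2)$ after a coordinate change by Lemma~\ref{Lem:ChangeofCoordinatesLength2}; the degree-zero budget $\sum_a H(a)_0=2$ and $H(0)_0=H(0)_u=h_u$), but the decisive step --- that each $Q(a)$ is a shifted ``complete-intersection type'' module over the Artinian base $A/I$, so that $H(a)$ is rigid --- is asserted, not proved, and you say so yourself. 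That is precisely the content of the conjecture. Note also that for a graded complete intersection $k[x,y]/(f_a,g_a)$ the Hilbert function is determined by the \emph{pair} of degrees $(\deg f_a,\deg g_a)$, not by the socle degree together with the initial value, so even granting the structural claim the final ``pinning down'' needs more than you state; and the classical structure theory you want to transport is for Gorenstein quotients over a field, whereas here $Q(0)_0$ can have length $2$.

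Two further cautions. First, the conjecture as literally printed omits the hypothesis $h_0=2$ that the surrounding text clearly intends; without it the statement is \emph{false} by the paper's own Example~\ref{Example:2SymmDec}, where $H=(4,4,1)$ admits two distinct symmetric decompositions. You correctly read the hypothesis back in, but any proof must use $h_0=2$ in the structural heart of the argument, not merely in the normalization of $I$, since Example~\ref{Example:2SymmDec} shows the rigidity genuinely fails for larger $h_0$. Second, your proposed induction on $u$ ``peeling off extremal degrees'' is not sketched in enough detail to assess; the paper's own case analysis in Section~5 already shows that ruling out a second decomposition can require delicate length counts inside $k[\![x,y]\!]/J^*$ (see the argument for $(2,2,3,1)$), which suggests the induction step is far from formal. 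In short: the approach is a plausible research program consistent with the paper's framework, but it is not a proof, and the paper contains none to compare it against.
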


\section{$2$-admissible $I$-Gorenstein sequences with end degree at most $3$ and $h_0=2$}
In this section, we give a complete list of $2$-admissible $I$-Gorenstein sequences with $h_0=2$ and of end degree at most $3$ where $I$ is an ideal in $A=k[\![x,y]\!]/J$. This list shows that the Conjecture \ref{Conj:2admissible-IGoresteinsSeq} is true if $H$ is an $I$-admissible sequence with $h_0=2$, and end degree at most $3.$
 
Let $I$ be an ideal in an AG $k$-algebra $A=k[\![ x,y ]\!]/J$ such that $G(I)$ has the Hilbert function $H$ with $h_0=2.$ Then $\ell(A/I) = 2.$ The following lemma shows that in this case we can assume that $I=(x,y^2)$, up to a change of coordinates.

\begin{lemma}\label{Lem:ChangeofCoordinatesLength2}
    Let $A=k[\![ x,y ]\!]/J$ be an AG $k$-algebra with an embedding dimension two. If $I$ is an ideal in $A$ such that $\ell(A/I) = 2,$ then $I=(x,y^2)$ up to a change of coordinates.
\end{lemma}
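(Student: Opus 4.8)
The plan is to exploit the very rigid structure of a length-two local algebra. First I would note that $\ell(A/I)=2$ forces $I$ to be a proper ideal, so $I\subseteq\m$, and that $A/I$ is a local Artinian $k$-algebra of length two with residue field $k$. Its maximal ideal is $\bar{\m}=(\m+I)/I=\m/I$, and the composition series $0\subsetneq\bar{\m}\subsetneq A/I$ shows $\dim_k\bar{\m}=1$. By Nakayama's lemma $\bar{\m}^2\neq\bar{\m}$, so $\bar{\m}^2$ is a proper $k$-subspace of the one-dimensional space $\bar{\m}$, whence $\bar{\m}^2=0$. Translating back through the surjection $A\to A/I$ gives the two facts I need: $\m^2\subseteq I$ (from $\bar{\m}^2=0$) and $\dim_k\m/I=\dim_k\bar{\m}=1$.

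Next I would use the embedding dimension hypothesis. Since $A=k[\![x,y]\!]/J$ has embedding dimension two we have $\dim_k\m/\m^2=2$, equivalently $J\subseteq\m^2$. Because $\m^2\subseteq I\subseteq\m$, the image $I/\m^2$ is a $k$-subspace of $\m/\m^2$ of codimension $\dim_k\m/I=1$, hence a one-dimensional subspace. Choose a nonzero linear form $f=\alpha x+\beta y$ whose class spans $I/\m^2$. A short check then shows $I=(f)+\m^2$: the inclusion $\supseteq$ is clear since $f\in I$ and $\m^2\subseteq I$, while for $a\in I\subseteq\m$ the class $\bar a\in I/\m^2=k\bar f$ gives $a-cf\in\m^2$ for some $c\in k$, so $a\in(f)+\m^2$.

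Finally I would perform the change of coordinates. Completing $f$ to a basis $f,f'$ of the space of linear forms and applying the automorphism of $k[\![x,y]\!]$ sending $x\mapsto f,\ y\mapsto f'$ produces an isomorphism $A\cong k[\![x,y]\!]/J'$ carrying $I$ to $(x)+\m^2$; and since $\m^2=(x^2,xy,y^2)$ with $x^2,xy\in(x)$, this equals $(x,y^2)$, as claimed. The one place deserving care is the opening step: the whole argument hinges on deducing $\m^2\subseteq I$ and $\dim_k\m/I=1$ from $\ell(A/I)=2$, and on the embedding dimension being exactly two (i.e. $J\subseteq\m^2$), which is what guarantees that the one-dimensional subspace $I/\m^2$ is genuinely spanned by a \emph{linear} form rather than forcing any interaction with the defining ideal $J$.
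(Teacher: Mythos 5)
Your proof is correct and follows essentially the same route as the paper's: both arguments reduce to showing $\m^2\subseteq I$ and $\dim_k\m/I=1$, then producing a linear form in $I\setminus\m^2$ that can be completed to a regular system of parameters, so that $I=(x)+\m^2=(x,y^2)$ after the coordinate change. The only cosmetic difference is that you identify $I=(f)+\m^2$ by a direct two-inclusion argument on the subspace $I/\m^2$, while the paper compares lengths of $A/(v,u^2)$ and $A/I$; the content is the same.
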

\begin{proof}
Suppose  $\ell(A/I) = 2$. Let $\m=(x,y)A.$
Then  we have a composition series $I \subset \m \subset A$ with $m/I \cong A/\m.$
 Hence we obtain $\m^2 \subseteq I.$ Since $\ell(A/I)=2$ whereas $\ell(A/\m^2)=3,$ $\m^2 \subsetneq I.$ We choose an element $v \in I\setminus \m^2.$ Let $\frac{\m}{I} = \langle u \rangle $ as a $k$-vector space.
\noindent\newline {\underline{\bf Claim}:  $\m=(u,v).$} 

By Nakayama's lemma it is enough to show that $\bar u,\bar v$ are $k$-linearly independent in ${\m}/{\m^2}$ where $\bar{u},\bar{v}$ denote the image of $u,v$ in $\m/\m^2.$
First note that $u,v \in \m \setminus \m^2.$
Suppose, $u, v$ are linearly dependent in ${\m}/{\m^2}.$ Then we may assume that  $\bar{u} = \bar{c} \bar{v}$ for some $c \in A.$ Hence $ u -cv \in \m^2 \subsetneq I.$ Since $v \in I,$ we obtain that $u \in I$. This is a contradiction because $\bar{u}$ is a nonzero element in $\m/I$. Hence $u, v$ are linearly independent in  ${\m}/{\m^2}.$

Therefore $\ell(A/(v,u^2))=2.$ 
Since we have $(v, u^2) \subset I$ and $\ell(A/(v,u^2)) = \ell(A/I) =2,$ we get $I=(v, u^2)$ Therefore by a change of coordinates, we can assume that $I=(x,y^2).$

\end{proof}

\begin{proposition}\label{Prop:2andLast11}
    Let $\h =(2,h_1, h_2, \ldots, h_{u-2},h_{u-1},1)$ be a $b$-admissible sequence. If $\h$ is an $I$-Gorenstein sequence for an ideal $I$ in an AG $k$-algebra $A =k[\![x,y]\!]/J$ with $\h(1)_0 = 0$, and $\h(0) = (1,1, \ldots, 1)$, then $\h(1)_1 \leq 1.$
\end{proposition}
\begin{proof} By definition $\h(a)_j=0$ whenever $a+j \geq u+1.$
 Therefore  by Theorem \ref{Thm:SymmetricDecomposition},  we have  
    \[\h(1)_{u-1} +\h(0)_{u-1} = h_{u-1}.\]
    Since $H(1)_0 = 0,$ by Proposition \ref{Prop:technical} we have $\h(1)_{u-1} =\h(1)_0 =  0.$

Therefore, the decomposition table looks like the following:
\begin{center}
    {\setlength{\tabcolsep}{1pt}
\begin{tabular}{c | c c c c c c c}
\hline 
    H &&& 2, & $h_1$, & $\cdots$ & $h_{u-1}$, & $h_u$ \\
\hline
      H(0) \,&&& 1,&1, & $\cdots$ & 1,& 1  \\
      H(1) \,&&& 0 & - &  $\cdots$ &  0 & \\
      H(2) \,&&& - & - & $\cdots$ & & \\
      \vdots &&& &  &          & &\\
      \hline
      
  \end{tabular}} 
\end{center}
By definition $Q(0)_0 = \frac{A}{(0:I^u)}.$ Since $\h(0)_0 = 1,$ we get 
\begin{equation}\label{eq:m=colonIdeal}
    \m = (0:I^u).
\end{equation}
Moreover, by Equation \eqref{EQ:Q} $Q(1)_0 = \frac{(0:I^u)}{I \cap (0:I^u) + (0:I^{u-1})}.$ Hence $H(1)_0 = 0$ implies that
\begin{equation}\label{EQ:5.2}
    (0:I^{u}) = I \cap (0:I^u) + (0:I^{u-1}).
\end{equation}
Substituting Equation \eqref{eq:m=colonIdeal} in Equation \eqref{EQ:5.2}, we obtain
\begin{equation}\label{EQ:maximalIdealEquals}
    \m = I + (0:I^{u-1}).
\end{equation}

\noindent Further, by Equation \eqref{EQ:Q}
\[Q(0)_1 = \frac{I \cap (0:I^u)}{I^2\cap (0:I^u) + I \cap (0:I^{u-1})} = \frac{I}{I \cap (0:I^{u-1})}.\]
Since $\ell(A/I) = 2,$ and $\h(0)_1 = 1$, we obtain 
\begin{equation}\label{EQ:len}
 \ell\left(\frac{A}{I \cap (0:I^{u-1})}\right) = 3.
\end{equation}
\noindent  By  Equation \eqref{EQ:maximalIdealEquals} we have
\begin{equation*}
    \begin{split}
        \dfrac{\m}{I} & =\dfrac{I + (0:I^{u-1})}{I}\\
                    & = \dfrac{(0:I^{u-1})}{I \cap (0:I^{u-1})}.
    \end{split}
\end{equation*}
Since $\ell\left(\dfrac{\m}{I}\right) = 1,$ we obtain $\ell\left(\dfrac{(0:I^{u-1})}{I \cap (0:I^{u-1})}\right) = 1.$ 
By using Equation \eqref{EQ:len}, we obtain 
$\ell\left(\dfrac{A}{(0:I^{u-1})}\right) = 2.$
Hence by Lemma \ref{Lem:ChangeofCoordinatesLength2}, after a change of coordinates, we can assume that  $(0:I^{u-1}) = (z,w^2)$ where $\m = (z,w)$ in $A$. 

Since $\ell(A/I)=2,$ by Lemma \ref{Lem:ChangeofCoordinatesLength2} we may assume that $I=(x,y^2),$ up to a change of coordinates.

\noindent {\bf \underline{Claim}}: $\bar z, \bar x$ are linearly independent in $\frac{\m}{\m^2}$, and hence $\m=(x,z).$

Suppose not. Then $z -\alpha x \in \m^2$ for some $\alpha \in A$. 
Note that as $\ell(\m/I)=1,$ $\m^2 \subseteq I=(x,y^2).$ Hence we obtain $z \in I,$ which gives that
\[
(0:I^{u-1}) =(z,w^2)\subseteq I.\] By Equation \eqref{EQ:maximalIdealEquals} we obtain \[\m = I+(0:I^{u-1}) = I,\] which is a contradiction. 
This proves the claim. 

Therefore $\ell(A/(x,z^2))=2.$ As $(x,z^2) \subseteq I$ and $\ell(A/I)=2$, 
 we obtain $I = (x,z^2)$.
Since $z \in (0:I^{u-1}),$ we get $xz, z^3 \in  I \cap (0:I^{u-2}).$ Therefore the set $\{1,x,z,z^2\}$ spans the $k$-vector space $\frac{A}{I^2 +I \cap (0:I^{u-2})},$ and hence
\begin{equation}\label{EQ:Qhilb}
    \ell\left(\frac{A}{I^2 +I \cap (0:I^{u-2})}\right) \leq 4.
\end{equation}
By Equation \eqref{EQ:Q} we have \[Q(1)_1 = \frac{I \cap (0 : I^{u-1})}{I^2 \cap (0:I^{u-1})+ I \cap (0:I^{u-2})} = \frac{I \cap (0 : I^{u-1})}{I^2 + I \cap (0:I^{u-2})}. \]
Using Equation  \eqref{EQ:Qhilb} and \eqref{EQ:len} we obtain 
\[\h(1)_1 = \ell \left(\frac{A}{ I^2 + I \cap (0:I^{u-2})} \right) - \ell\left(\frac{ A}{ I \cap (0:I^{u-1})}\right) \leq 4 - 3 = 1.\]

\end{proof}


\begin{remark}
 Proposition \ref{Prop:2andLast11} is useful in obtaining information about the symmetric decomposition of the Hilbert function of $G(I)$ in certain cases. For example, if $\h=(2,4,3,1,1)$ is an $I$-Gorenstein sequence, then  by Proposition \ref{Prop:2andLast11}, we conclude that $\h$ has the unique symmetric decomposition 
 given below:
\begin{center}
    {\setlength{\tabcolsep}{1pt}
\begin{tabular}{c | c c c c c c c}
\hline 
    H &&& 2, & 4, & 3, & 1, & 1 \\
\hline
      H(0) \,&&& 1,& 1, & 1, & 1,& 1  \\
      H(1) \,&&& 0 & 1 &  1 &  0 &  \\
      H(2) \,&&& 1 & 2 & 1 & & \\
      H(3) \,&&& 0 & 0 &           & &\\
      H(4) \,&&& 0 &  & & & \\
      \hline
      
  \end{tabular}} 
\end{center}

\end{remark}
\vskip 2mm
\noindent We are now ready to describe the complete set of $2$-admissible $I$-Gorenstein sequences for an ideal $I$ in $A=k[\!| x,y |\!]/J$ with the end degree at most $3$ and $h_0=2$. If $\h = (2, h_1, \ldots, h_u)$ is an $I$-Gorenstein sequence, then by Corollary \ref{Cor:HuAtmostH0} we get $h_u \leq 2.$
Below is the list of all $2$-admissible sequences up to $u =3$ with $h_u \leq 2$, where we indicate the $I$-Gorenstein sequences in bold. 

\begin{table}[ht]
    \centering
    \begin{tabular}{|c|c|c|c|c|c|c|}
         \hline
         \bf{(2,1)} & \bf{(2,2)} & \bf{(2,1,1)} & \bf{(2,2,1)} &
         \bf{(2,2,2)}  & \bf{(2,3,1)} & \bf{(2,3,2)} \\
         \hline {(2,4,1)} & \bf{(2,4,2)} & \bf{(2,1,1,1)} & \bf{(2,2,1,1)} & \bf{(2,2,2,1)} & \bf{(2,2,2,2)} & (2,2,3,1)  \\
         \hline
         (2,2,3,2)  & \bf{(2,3,1,1)} & \bf{(2,3,2,1)} & \bf{(2,3,2,2)}& \bf{(2,3,3,1)} & \bf{(2,3,3,2)} &(2,3,4,1) \\
         \hline (2,3,4,2) &  (2,4,1,1) & 
         \bf{(2,4,2,1)} & \bf{(2,4,2,2)} & \bf{(2,4,3,1)} & 
        \bf{(2,4,3,2)} & (2,4,4,1) \\
        \hline
        \bf{(2,4,4,2)} & \multicolumn{6}{c}{} \\
         \cline{1-1}
\end{tabular}
\vspace{0.25em}
    \caption{2-admissible sequences up to u=3 with $h_u \leq 2$.}
    \label{tab:tab_1}
\end{table}

\vspace{.4em}

In Table \ref{table:IGorenstein} we justify that the sequences indicated in bold in Table \ref{tab:tab_1} are $I$-Gorenstein. In fact, for each such sequence $\h$ we provide a polynomial $F$ in the divided power ring $k_{DP}[X,Y]$ so that the apolar algebra $A_F:=k[\![x,y]\!]/\ann_{k[\![x,y]\!]}(F)$ is an AG $k$-algebra (see \cite[Appendix A]{IK99}, \cite[Appendix A2.4]{Ei95} for basic properties of divided power rings, and \cite{ER17} for definition of apolar algebra). Then $I=(x,y^2)$ is an ideal in $A_F$ such that $G(I)$ has the Hilbert function $\h.$ Moreover, in Table \ref{table:IGorenstein} we also describe the symmetric decomposition of the Hilbert function of $G(I)$.

\begin{table}[ht!]
{\centering
\begin{tabular}{|c|c|c| |c|c|c|}
         \hline
          {\bf  $I$-Gorenstein }  &{ \bf Form} & {\bf symmetric } \, &  {\bf  $I$-Gorenstein }  &{ \bf Form} & {\bf symmetric } \,
           \\{\bf sequence HF} & {\bf  $F$} & {\bf decomposition} & {\bf sequence HF} & {\bf  $F$} & {\bf decomposition}\\
            \hline 
$(2,1)$ & $X+Y^2$ & {\setlength{\tabcolsep}{1pt}
\begin{tabular}{c  c c c c }
        HF  \, &&& 2, &1 \\
        \hline
      H(0) \,&&& 1, &1  \\
      H(1) \,&&& 1 &\\
  \end{tabular}} & 
            $(2,2)$ & $XY$ & {\setlength{\tabcolsep}{1pt}
\begin{tabular}{c  c c c c }
        HF \,&&& 2, & 2 \\
        \hline
      H(0) \,&&& 2, & 2  \\
      H(1) \,&&& 0 &\\
  \end{tabular}}\\
            \hline
            $(2,1,1)$ & $X^2+Y^2$ & {\setlength{\tabcolsep}{1pt}
\begin{tabular}{c  c c c c c }
        HF \, &&& 2, &1, &1 \\
        \hline
      H(0) \,&&& 1, &1, &1  \\
      H(1) \,&&& 1, & 0& \\
  \end{tabular}}  &
            $(2,2,1)$ & $X^2+Y^3$ &  {\setlength{\tabcolsep}{1pt}
\begin{tabular}{c  c c c c c }
        HF \, &&& 2, &2, & 1\\
        \hline
      H(0) \,&&& 1, &1, &1  \\
      H(1) \,&&& 1, & 1& \\
      H(2) \,&&& 0
  \end{tabular}}  \\
            \hline
            $(2,2,2)$ & $X^2Y$ & {\setlength{\tabcolsep}{1pt}
\begin{tabular}{c  c c c c c }
        HF \,&&& 2, &2, &2\\
        \hline
      H(0) \, &&& 2, &2, &2 
\end{tabular}} &
            $(2,3,1)$ & $XY^2$ & {\setlength{\tabcolsep}{1pt}
\begin{tabular}{c c c c c c }
        HF \, &&& 2, &3, &1 \\
        \hline
      H(0) \,&&& 1, &2, &1  \\
      H(1) \,&&& 1, & 1& \\
      H(2) \,&&& 0
  \end{tabular}} \\
            \hline 
            $(2,3,2)$ & $X^2Y+Y^4$ & {\setlength{\tabcolsep}{1pt}
\begin{tabular}{c c c c c c }
        HF \, &&& 2, &3, &2\\
        \hline
      H(0) \,&&& 2, &3, &2  \\
      
  \end{tabular}} &
            $(2,4,2)$ & $XY^3$ & {\setlength{\tabcolsep}{1pt}
\begin{tabular}{c c c c c c }
        HF \,&&& 2, & 4, & 2\\
        \hline
      H(0) \,&&& 2, &4, &2  \\
      
  \end{tabular}}  \\
            \hline
     $(2,1,1,1)$ & $X^3+Y^2$ &{\setlength{\tabcolsep}{1pt}
\begin{tabular}{c  c c c c c c }
        HF \, &&& 2, &1, &1, &1\\
        \hline
      H(0) \,&&& 1, &1, &1, &1  \\
      H(3) \,&&& 1, & 0& \\
  \end{tabular}} &
            $(2,2,1,1)$ & $X^3+Y^3$ & {\setlength{\tabcolsep}{1pt}
\begin{tabular}{c  c c c c c c}
        HF \, &&& 2, &2, &1, &1 \\
        \hline
      H(0) \,&&& 1, &1, &1, & 1  \\
      H(2) \,&&& 1, & 1& &\\
  \end{tabular}}  \\
            \hline
            $(2,2,2,1)$ & $X+Y^6$ & {\setlength{\tabcolsep}{1pt}
\begin{tabular}{c  c c c c c c}
        HF \, &&& 2, &2, &2, &1 \\
        \hline
      H(0) \,&&& 1, &1, &1, &1  \\
      H(1) \,&&& 1, & 1,&1 \\
  \end{tabular}} &
            $(2,2,2,2)$ & $X^3Y$ & {\setlength{\tabcolsep}{1pt}
\begin{tabular}{c  c c c c c c}
        HF \, &&& 2, &2, &2,&2 \\
        \hline
      H(0) \,&&& 2, &2, &2, &2  \\
  \end{tabular}} \\
            \hline
            $(2,3,1,1)$ & $X^3+Y^4$ & {\setlength{\tabcolsep}{1pt}
\begin{tabular}{c  c c c c c c}
        HF \, &&& 2, &3, &1, &1 \\
        \hline
      H(0) \, &&& 1, &1, &1, &1\\
      H(1) \,&&& 0, &1, &0, &1  \\
      H(2) \,&&& 1, & 1& \\
  \end{tabular}} &
            $(2,3,2,1)$ & $X^2+Y^6$ & {\setlength{\tabcolsep}{1pt}
\begin{tabular}{c  c c c c c c }
        HF \, &&& 2, &3, &2, &1 \\
        \hline
      H(0) \,&&& 1, &1, &1, &1 \\
      H(1) \,&&& 1, & 2, & 1 \\
  \end{tabular}} \\
            \hline
            $(2,3,2,2)$ & $ X^2+Y^7$ & {\setlength{\tabcolsep}{1pt}
\begin{tabular}{c  c c c c c c}
        HF \, &&& 2, &3, &2, &2 \\
        \hline
      H(0) \,&&& 2, &2, &2, &2  \\
      H(1) \,&&& 0, & 1,&0 \\
  \end{tabular}} &
            $(2,3,3,1)$ & $X^2Y^2$ & 
            {\setlength{\tabcolsep}{1pt}
\begin{tabular}{c  c c c c c c }
        HF \, &&& 2, &3, &3, &1 \\
        \hline
      H(0) \,&&& 1, &2, &2, &1  \\
      H(1) \,&&& 1, & 1,& 1 \\
  \end{tabular}} \\
            \hline
            $(2,3,3,2)$ & $X^3+Y^7$ &{\setlength{\tabcolsep}{1pt}
\begin{tabular}{c  c c c c c c }
        HF \, &&& 2, &3, &3, &2\\
        \hline
      H(0) \,&&& 2, &3, &3, &2 \\     
  \end{tabular}}  &
             $(2,4,2,1)$ & $X^2Y+Y^6$ & {\setlength{\tabcolsep}{1pt}
\begin{tabular}{c  c c c c c c }
        HF \, &&& 2, &4, &2, &1 \\
        \hline
      H(0) \,&&& 1, &1, &1, &1  \\
      H(1) \,&&& 1, & 3,& 1 \\
  \end{tabular}}\\
            \hline 
            $(2,4,2,2)$ & $X^2Y+Y^7$ & {\setlength{\tabcolsep}{1pt}
\begin{tabular}{c  c c c c c c }
        HF \, &&& 2, &4, &2, &2 \\
        \hline
      H(0) \,&&& 2, &2, &2, &2  \\
      H(1) \,&&& 0, & 2,& 0 \\
  \end{tabular}}&
            $(2,4,3,1)$ & $XY^4$ &  {\setlength{\tabcolsep}{1pt}
\begin{tabular}{c  c c c c c c }
        HF \, &&& 2, &4, &3, &1 \\
        \hline
      H(0) \,&&& 1, &2, &2, &1  \\
      H(1) \,&&& 1, & 2,& 1 \\
  \end{tabular}}\\
            \hline
            $(2,4,3,2)$ &$Y^7+X^2Y^2$ &{\setlength{\tabcolsep}{1pt}
\begin{tabular}{c  c c c c c c }
        HF \, &&& 2, &4, &3, &2 \\
        \hline
      H(0) \,&&& 2, &3, &3, &2  \\
      H(1) \,&&& 0, & 1,& 0 \\
  \end{tabular}} &
            $(2,4,4,2)$ &  $XY^5$ & {\setlength{\tabcolsep}{1pt}
\begin{tabular}{c  c c c c c c }
        HF \, &&& 2, &4, &4, &2 \\
        \hline
      H(0) \,&&& 2, &4, &4, &2  \\
  \end{tabular}}\\
            \hline  
        
    \end{tabular} \\
}
\vspace{0.2em}
 \caption{$2$-admissible $I$-Gorenstein sequences with a symmetric decomposition}
    \label{table:IGorenstein}
\end{table}
\vspace{1em}

 Next we prove that each of the sequences that are not bold 
 in the Table \ref{tab:tab_1} are not $I$-Gorenstein sequences. Since $\ell(A/I) = 2$, by Lemma \ref{Lem:ChangeofCoordinatesLength2} we assume $I=(x,y^2).$

\noindent {\bf (2,4,1)}: Suppose (2,4,1) is an $I$-Gorenstein sequence. Then the possible symmetric decompositions are
\[
{\setlength{\tabcolsep}{1pt}
\begin{tabular}{c | c c c c c}
\hline
      H \,& & &  2, & 4, & 1\\
\hline
      H(0) \,&& & 1, &3, &1  \\
      H(1) \,&&& 1, &1 & \\
      H(2) \,&&& 0 & & \\
     \hline
\end{tabular}}
\hspace{3em}
{\setlength{\tabcolsep}{1pt}
\begin{tabular}{c | c c c c c}
\hline
      H \,& & &  2, & 4, & 1\\
\hline
      H(0) \,&& & 1, &4, &1  \\
      H(1) \,&&& 0, &0 & \\
      H(2) \,&&& 1 & & \\
     \hline
\end{tabular}}
\]
Since $H(0)_0 = 1$ in both the symmetric decompositions, by definition of $Q(0)_0$ we obtain $(0: I^2) = \m.$  Hence $\m I \subseteq (0:I)$ which gives that  $xy, y^3 \in (0:I).$ Therefore $\ell\left(A/I^2+I \cap (0:I)\right) \leq 4.$  Since by Equation \eqref{EQ:Q} we have $Q(0)_1 = \frac{I}{I^2+I \cap (0:I)}$, we obtain $\h(0)_1 = \ell(A/I^2+I \cap (0:I)) - \ell(A/I) \leq 4- 2 = 2$. This is a contradiction in both the symmetric decompositions. Hence $(2,4,1)$ is not an $I$-Gorenstein sequence.
\vskip 2mm

\noindent {\bf (2,2,3,1)}:
If $H=(2,2,3,1)$ is an $I$-Gorenstein sequence, then 
$\ell(A/I) = 2,$ $\ell(A/I^2) = 4,$ $\ell(A/I^3) =7,$ and the only possible symmetric decomposition of $H$ is
\[{\setlength{\tabcolsep}{1pt}
\begin{tabular}{c | c c c c c c}
\hline
      H \,&&&  2, & 2, & 3, &1\\
\hline
      H(0) \,&&& 1, &2, &2, & 1 \\
      H(1) \,&&& 1, & 0, & 1 & \\
      H(2) \,&&& 0, & 0& \\
      H(3) \,&&& 0 &  & & \\
      \hline
     
\end{tabular}} 
\]
Since $Q(0)_0 = A/(0:I^3)$ and $\h(0)_0 = 1,$ we get $(0:I^3) = \m.$ Hence $\m I \subseteq (0:I^2).$ Therefore we obtain
\begin{equation}\label{EQ:0colonIcube=m}
    xy, \, y^3 \in  I \cap (0:I^2).
\end{equation}
By Equation \eqref{EQ:Q} we have 
\begin{equation*}
    Q(1)_0 = \frac{I^0 \cap (0 : I^3)}{I \cap (0:I^3) + I^0 \cap (0:I^2)} = \frac{(0:I^3)}{I + (0:I^2)}.
\end{equation*}
Since $\ell\left(A/(0:I^3)\right)=1$ and $\h(1)_0=1,$ we get $\ell(A/I+(0:I^2)) = 2.$ Further, $I \subseteq I + (0:I^2)$ and $\ell(A/I) = 2$ implies that $ I+(0:I^2)=I$, which gives $(0:I^2) \subseteq I.$

Again by Equation \eqref{EQ:Q} we have 
\begin{equation*}
\begin{split}
     Q(0)_1 & = \frac{I \cap (0:I^3)}{I^2 \cap (0:I^3)+I \cap (0:I^2)} \\
     & = \frac{I}{(0:I^2)} \quad \quad \quad ( \mbox{since } (0:I^2) \subseteq I).
\end{split}
\end{equation*}
Since $\h(0)_1 = 2$ and $\ell(A/I) =2$, we get $ \ell\left(A/(0:I^2)\right) = 4.$
Moreover $I^2 \subseteq (0:I^2) $ and $\ell(A/I^2) = 4,$ we obtain
\begin{equation}\label{EQ:Isquare}
    I^2 =  (0:I^2).
\end{equation}
Hence by Equation \eqref{EQ:0colonIcube=m}, we get
\begin{equation*}\label{xyinI^2}
    xy, y^3 \in I^2.
\end{equation*}
Note that $I^2 = (x^2, xy^2, y^4) +J$  in $k[\![ x,y]\!]/J$. 
Since $\ell\left(k[\![x,y]\!]/J\right) = 8,$ we get $\ell\left(k[\![x,y]\!]/J^{*}\right) = 8$ where $J^*$ denotes the ideal of initial forms of $J$ in $k[\![x,y]\!].$
Since $xy \in I^2 = (x^2, xy^2,y^4)+J$ we obtain 
\begin{equation}\label{Eq:1for J}
xy - c_1x^2 \in J^*    
\end{equation}
for some $c_1 \in k$. Similarly, $y^3 \in I^2,$ implies that
\begin{equation}\label{EQ:2forJ}
    y^3 - (d_1x+d_2y)x^2 +d_3xy^2 \in J^*
\end{equation}
for some $d_1, d_2, d_3 \in k.$
Since $x^4\in J,$ by Equations \eqref{Eq:1for J} and \eqref{EQ:2forJ} we obtain that the set $\{1,x,y,x^2, y^2, x^3\}$ spans the $k$-vector space $k[\![ x,y]\!]/J^*$. Hence $\ell \left( k[\![ x,y]\!]/J^*\right) \leq 6$ which is a contradiction.

\vskip 2mm

\noindent {\bf (2,2,3,2)}: Since $(2,2,3,2)$ does not admit a symmetric decomposition, it is not an $I$-Gorenstein sequence.

\vskip 2mm

\noindent {\bf (2,3,4,1)}:
If $H=(2,3,4,1)$ is an $I$-Gorenstein sequence, then the possible symmetric decomposition of $H$ is
\[
{\setlength{\tabcolsep}{1pt}
\begin{tabular}{c | c c c c c c}
\hline
      H \,&&&  2, & 3, & 4, &1\\
\hline
      H(0) \,&&& 1, &3, &3, & 1 \\
      H(1) \,&&& 1, & 0, & 1 & \\
      \hline
\end{tabular}}
\]
Since $Q(0)_0 = \frac{A}{(0:I^3)}$ and $H(0)_0 = 1$, we get $(0:I^3) = \m = (x,y).$ Hence $\m I \in (0:I^2)$. Therefore, $xy, y^3 \in I \cap (0:I^2).$ This gives that $\ell(A/ I^2 + I \cap (0:I^2)) \leq 4.$

Again, by Equation \eqref{EQ:Q}, \[Q(0)_1 = \frac{I \cap (0:I^3)}{I^2 \cap (0:I^3) + I \cap (0:I^2)} = \frac{I}{I^2 + I \cap (0:I^2)}.\] Since $\ell(A/I) = 2$, and $\ell(A/ I^2 +I \cap (0:I^2)) \leq 4,$ we obtain $H(0)_1 \leq 2.$ This is a contradiction.

\noindent {\bf (2,3,4,2)}:
Since $(2,3,4,2)$ does not admit a symmetric decomposition, it is not an $I$-Gorenstein sequence.
\vskip 2mm

\noindent {\bf (2,4,1,1)}: Suppose $H=(2,4,1,1)$ is an $I$-Gorenstein sequence. Then the possible symmetric decomposition of $H$ is
\[
{\setlength{\tabcolsep}{1pt}
\begin{tabular}{c | c c c c c c}
\hline
      H \,&&&  2, & 4, & 1, &1\\
\hline
      H(0) \,&&& 1, &1, &1, & 1 \\
      H(1) \,&&& 0, & {\footnotesize $\leq$}1, & 0 & \\
      H(2) \,&&& -, & -,& 0\\
      H(3) \,&&& - & 0 & & \\
      \hline
     
\end{tabular}}
\]
Here we conclude $H(1)_1 \leq 1$  by Proposition \ref{Prop:2andLast11}. This shows that $H(2)_1 \geq 2$ which implies that $H(2)_0 \geq 2,$ a contradiction.

\vskip 2mm

\noindent {\bf (2,4,4,1)}: If $H=(2,4,4,1)$ is an $I$-Gorenstein sequence, then the  possible symmetric decompositions of $H$ are 
\[
{\setlength{\tabcolsep}{1pt}
\begin{tabular}{c | c c c c c c}
\hline
      H \, &&&  2, & 4, & 4, &1\\
\hline
      H(0) \,&&& 1, &3, &3, & 1 \\
      H(1) \,&&& 1, & 1, & 1 & \\
      H(2) \,&&& 0, & 0& \\
      H(3) \,&&& 0 &  & &\\
      \hline
     
\end{tabular}} 
\hspace{3em}
{\setlength{\tabcolsep}{1pt}
\begin{tabular}{c | c c c c c c}
\hline
      H \,&&&  2, & 4, & 4, &1\\
\hline
      H(0) \,&&& 1, &4, &4, & 1 \\
      H(1) \,&&& 0, & 0 &  & \\
      H(2) \,&&& 0, & 0 & \\
      H(3) \,&&& 1 &  & & \\
      \hline
     
\end{tabular}}
\]
Since $\h(0)_1 = 1$ in both cases, by using the arguments similar to the case $(2,4,1)$ we get a contradiction.

\bibliographystyle{abbrv}
\bibliography{main}

\begin{thebibliography}{10}

\bibitem{CoCoA}
J.~Abbott, A.~M. Bigatti, and L.~Robbiano.
\newblock {CoCoA}: a system for doing {C}omputations in {C}ommutative {A}lgebra.
\newblock Available at \url{http://cocoa.dima.unige.it}.

\bibitem{BN99}
C.~Blancafort and S.~Nollet.
\newblock Hilbert polynomials over {A}rtinian rings.
\newblock {\em Illinois J. Math.}, 43(2):338--349, 1999.

\bibitem{Br77}
J.~Brian{\c c}on.
\newblock Description de {$H{\rm ilb}\sp{n}C\{x,y\}$}.
\newblock {\em Invent. Math.}, 41(1):45--89, 1977.

\bibitem{BH98}
W.~Bruns and H.~Herzog.
\newblock {\em Cohen-Macaulay Rings}.
\newblock Cambridge Studies in Advanced Mathematics. Cambridge University Press, 1998.

\bibitem{CPR05}
A.~Corso, C.~Polini, and M.~E. Rossi.
\newblock Depth of associated graded rings via {H}ilbert coefficients of ideals.
\newblock {\em J. Pure Appl. Algebra}, 201(1-3):126--141, 2005.

\bibitem{Singular}
W.~Decker, G.-M. Greuel, G.~Pfister, and H.~Sch\"onemann.
\newblock {\sc Singular} {4-4-0} --- {A} computer algebra system for polynomial computations.
\newblock Available at \url{http://www.singular.uni-kl.de}, 2024.

\bibitem{Ei95}
D.~Eisenbud.
\newblock {\em Commutative Algebra: With a View Toward Algebraic Geometry}.
\newblock Graduate Texts in Mathematics. Springer New York, NY, 1995.

\bibitem{EliasCode}
J.~Elias.
\newblock {Inverse-syst.lib}--{S}ingular library for computing {M}acaulay's inverse systems.
\newblock Available at \url{http://www.ub.edu/C3A/elias/inverse-syst-v.5.2.lib}, 2015.

\bibitem{ER17}
J.~Elias and M.~E. Rossi.
\newblock The structure of the inverse system of {G}orenstein {$k$}-algebras.
\newblock {\em Adv. Math.}, 314:306--327, 2017.

\bibitem{GN94}
S.~Goto and K.~Nishida.
\newblock Filtrations and the {G}orenstein property of the associated {R}ees algebras.
\newblock {\em Mem. Amer. Math. Soc.}, 110:69--134, 1994.

\bibitem{M2}
D.~R. Grayson and M.~E. Stillman.
\newblock Macaulay2, a software system for research in algebraic geometry.
\newblock Available at \url{http://www2.macaulay2.com}.

\bibitem{HKU11}
W.~Heinzer, M.-K. Kim, and B.~Ulrich.
\newblock The {C}ohen–{M}acaulay and {G}orenstein properties of rings associated to filtrations.
\newblock {\em Communications in Algebra}, 39(10):3547--3580, 2011.

\bibitem{Hun87}
C.~Huneke.
\newblock Hilbert functions and symbolic powers.
\newblock {\em Michigan Math. J.}, 34(2):293--318, 1987.

\bibitem{I84Compressed}
A.~Iarrobino.
\newblock Compressed algebras: {A}rtin algebras having given socle degrees and maximal length.
\newblock {\em Trans. Amer. Math. Soc.}, 285(1):337--378, 1984.

\bibitem{I89}
A.~Iarrobino.
\newblock The {H}ilbert function of a {G}orenstein {A}rtin algebra.
\newblock In M.~Hochster, C.~Huneke, and J.~D. Sally, editors, {\em Commutative Algebra}, pages 347--364, New York, NY, 1989. Springer New York.

\bibitem{I94Memoir}
A.~Iarrobino.
\newblock Associated graded algebra of a {G}orenstein {A}rtin algebra.
\newblock {\em Mem. Amer. Math. Soc.}, 107(514):viii+115, 1994.

\bibitem{IK99}
A.~Iarrobino and V.~Kanev.
\newblock {\em Power sums, {G}orenstein algebras, and determinantal loci}, volume 1721 of {\em Lecture Notes in Mathematics}.
\newblock Springer-Verlag, Berlin, 1999.
\newblock Appendix C by Iarrobino and Steven L. Kleiman.

\bibitem{IP21_B}
A.~Iarrobino and P.~Macias~Marques.
\newblock Reducibility of a family of local artinian gorenstein algebras, 2021.
\newblock arXiv:2112.14664.

\bibitem{IP21}
A.~Iarrobino and P.~Macias~Marques.
\newblock Symmetric decomposition of the associated graded algebra of an {A}rtinian {G}orenstein algebra.
\newblock {\em J. Pure Appl. Algebra}, 225(3):Paper No. 106496, 49, 2021.

\bibitem{JMR23}
J.~Jelisiejew, S.~K. Masuti, and M.~Rossi.
\newblock On the {H}ilbert function of {A}rtinian local complete intersections of codimension three.
\newblock {\em Journal of Pure and Applied Algebra}, 227(7):107326, 2023.

\bibitem{Macaulay1904}
F.~S. Macaulay.
\newblock On a method of dealing with the intersections of plane curves.
\newblock {\em Trans. Amer. Math. Soc.}, 5(4):385--410, 1904.

\bibitem{Macaulay1927}
F.~S. Macaulay.
\newblock Some {P}roperties of {E}numeration in the {T}heory of {M}odular {S}ystems.
\newblock {\em Proc. London Math. Soc. (2)}, 26:531--555, 1927.

\bibitem{Macaulay1916}
F.~S. Macaulay.
\newblock {\em The algebraic theory of modular systems}.
\newblock Cambridge Mathematical Library. Cambridge University Press, Cambridge, 1994.
\newblock Revised reprint of the 1916 original, With an introduction by Paul Roberts.

\bibitem{MR18}
S.~K. Masuti and M.~Rossi.
\newblock Artinian level algebras of socle degree 4.
\newblock {\em Journal of Algebra}, 507:525--546, 2018.

\bibitem{PU99}
C.~Polini and B.~Ulrich.
\newblock Necessary and sufficient conditions for the {C}ohen-{M}acaulayness of blowup algebras.
\newblock {\em Compositio Math.}, 119(2):185--207, 1999.

\bibitem{RV96}
M.~E. Rossi and G.~Valla.
\newblock A conjecture of {J}. {S}ally.
\newblock {\em Comm. Algebra}, 24(13):4249--4261, 1996.

\bibitem{RV10}
M.~E. Rossi and G.~Valla.
\newblock {\em Hilbert functions of filtered modules}, volume~9 of {\em Lecture Notes of the Unione Matematica Italiana}.
\newblock Springer-Verlag, Berlin; UMI, Bologna, 2010.

\bibitem{Sally77}
J.~D. Sally.
\newblock On the associated graded ring of a local {C}ohen-{M}acaulay ring.
\newblock {\em J. Math. Kyoto Univ.}, 17(1):19--21, 1977.

\bibitem{Scott1902}
C.~A. Scott.
\newblock On a recent method for dealing with the intersections of plane curves.
\newblock {\em Trans. Amer. Math. Soc.}, 3(2):216--263, 1902.

\bibitem{V94}
W.~V. Vasconcelos.
\newblock {\em Arithmetic of blowup algebras}, volume 195 of {\em London Mathematical Society Lecture Note Series}.
\newblock Cambridge University Press, Cambridge, 1994.

\bibitem{W24}
M.~Wojtala.
\newblock Iarrobino's decomposition for self-dual modules, 2024.
\newblock arXiv:2405.13829.

\end{thebibliography}

\end{document}